\newtheorem{theorem}{Theorem}
\newtheorem{lemma}{Lemma}
\newtheorem{fact}{Fact}
\newtheorem{conjecture}{Conjecture}
\newtheorem{claim}{Claim}
\newtheorem{problem}{Problem}
\newtheorem{case0}{Case}
\newtheorem{subcase0}{Subcase}[case0]
\newtheorem{case4}{Case}
\newtheorem{case5}{Case}
\newtheorem{case10}{Case}
\begin{document}

\title{Vertex-disjoint cycles of different lengths in tournaments
\thanks{Supported by National Natural Science Foundation of China (Grant Nos. 12311540140, 12242111, 12131013, 11601430), Guangdong Basic \& Applied Basic Research Foundation (Grant Nos. 2023A1515030208, 2022A1515010899), Shaanxi Fundamental Science Research Project for Mathematics and Physics (Grant No. 22JSZ009), and the Fundamental Research Funds for the Central Universities (Grant No. G2023KY0606).}}

\author{\quad Yandong Bai $^{a, b}$,  
\thanks{Corresponding author. 
E-mail addresses: 
bai@nwpu.edu.cn (Y. Bai),
wenpei.jia@foxmail.com (W. Jia).}
\quad Wenpei Jia  $^{a, b}$ \\
\small $^{a}$ School of Mathematics and Statistics, Northwestern Polytechnical University, \\
\small Xi'an, Shaanxi 710129, China\\
\small $^{b}$ Research $\&$ Development Institute of 
Northwestern Polytechnical University in Shenzhen, \\
\small Shenzhen, Guangdong 518057, China}
\date{}
\maketitle

\begin{abstract}
Bermond and Thomassen conjectured in 1981 that 
every digraph with minimum outdegree at least $2k-1$ 
contains $k$ vertex-disjoint cycles,
here $k$ is a positive integer.
Lichiardopol conjectured in 2014 that for every positive integer $k$ 
there exists an integer $g(k)$ such that 
every digraph with minimum outdegree at least $g(k)$ 
contains $k$ vertex-disjoint cycles of different lengths.
Recently, 
Chen and Chang proved in [J. Graph Theory 105 (2) (2024) 297-314] 
that for $k\geqslant 3$ every tournament with minimum outdegree at least $2k-1$ contains $k$ vertex-disjoint cycles in which two of them have different lengths.
Motivated by the above two conjectures and related results,
we investigate vertex-disjoint cycles of different lengths in tournaments,
and show that when $k\geqslant 5$ every tournament with minimum outdegree at least $2k-1$ contains $k$ vertex-disjoint cycles in which three of them have different lengths.
In addition,
we show that every tournament with minimum outdegree at least $6$ contains three vertex-disjoint cycles of different lengths and the minimum outdegree condition is sharp.
This answers a question proposed by Chen and Chang.
\medskip

\noindent 
\textbf{Keywords: }tournaments; vertex-disjoint cycles; cycles of different lengths
\end{abstract}

\section{Introduction}

Throughout this paper, 
a cycle (path) in a digraph always means a \textit{directed} cycle (path). 
We use Bang-Jensen and Gutin \cite{BG} for terminology and
notation not defined here. Only finite and simple digraphs are considered.

For a positive integer $k$, 
let $f(k)$ be the minimum integer such that 
every digraph with minimum outdegree at least $f(k)$ 
contains $k$ vertex-disjoint cycles,
and let $g(k)$ be the minimum integer such that 
every digraph with minimum outdegree at least $g(k)$ 
contains $k$ vertex-disjoint cycles of different lengths.
Clearly, $f(k)\leqslant g(k)$.

The finiteness of the above two functions are not obvious.
Thomassen \cite{T} was the first who obtained the finiteness result of $f(k)$,
to be precise, it had been proved that $f(k)\leqslant (k+1)!$.
In view of the complete symmetric digraphs,
one can see that $f(k)\geqslant 2k-1$.
In 1981, 
Bermond and Thomassen \cite{BT} conjectured that the equality holds,
which is selected as one of the hundred open problems listed in the monograph by Bondy and Murty \cite{BM2008}.

\begin{conjecture}[Bermond and Thomassen \cite{BT}]\label{conj:BTC}
$f(k)=2k-1$.
\end{conjecture}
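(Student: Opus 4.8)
The plan is to prove the two directions of the equality separately; since the lower bound $f(k)\geqslant 2k-1$ is already exhibited by the complete symmetric digraph on $2k-1$ vertices (each vertex has outdegree $2k-2$, while any $k$ vertex-disjoint cycles would require at least $2k$ vertices), the entire difficulty lies in the upper bound $f(k)\leqslant 2k-1$. I would prove this by induction on $k$. The base case $k=1$ is immediate: a digraph with minimum outdegree at least $1$ contains a cycle, since following out-arcs from any vertex in a finite digraph must eventually repeat a vertex. For the inductive step, given a digraph $D$ with $\delta^+(D)\geqslant 2k-1$, the goal is to locate a single cycle $C$ whose deletion leaves $\delta^+(D-V(C))\geqslant 2(k-1)-1=2k-3$; applying the induction hypothesis to $D-V(C)$ then yields $k-1$ vertex-disjoint cycles, which together with $C$ give the desired $k$.

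The heart of the inductive step is therefore the choice of $C$. Deleting $V(C)$ decreases the outdegree of a surviving vertex $v$ by exactly $|N^+(v)\cap V(C)|$, so it suffices to find a cycle $C$ for which every vertex $v\notin V(C)$ satisfies $|N^+(v)\cap V(C)|\leqslant 2$. A natural candidate is a shortest cycle $C$ of $D$: shortest cycles are chordless, which controls the arcs internal to $C$ and keeps $C$ short, thereby limiting how many out-arcs any fixed outside vertex can plausibly send into it. I would first try to show that, after possibly choosing $C$ among all shortest cycles so as to minimize a secondary quantity (for instance the number of arcs entering $V(C)$), the bound $|N^+(v)\cap V(C)|\leqslant 2$ holds for all $v\notin V(C)$, which would close the induction.

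The hardest part --- and precisely the reason the statement remains a long-standing conjecture rather than a theorem --- is that in a general digraph there is no a priori bound on how many out-neighbors an outside vertex may have on a shortest cycle, so the outdegree can drop by far more than $2$ upon deletion and the naive induction breaks. Overcoming this seems to require either a global extremal analysis of a minimal counterexample (exploiting the tight arithmetic of $2k-1$ available vertices against the $2k$ needed for $k$ cycles), or a fundamentally different, non-inductive argument that extracts many disjoint cycles simultaneously; the latter is the route behind the known linear upper bounds $f(k)=O(k)$ and behind the confirmed cases of small $k$ and of structured classes such as tournaments and regular digraphs. I would therefore expect the decisive step to be a structural dichotomy that either directly produces a cycle with the required small out-degree footprint or forces the digraph into a rigid configuration on which the $2k-1$ bound can be verified directly.
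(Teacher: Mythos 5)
The statement you were asked about is not a theorem of the paper at all: it is the Bermond--Thomassen conjecture, which the paper records as open (the paper itself notes it is proved only for $k\leqslant 3$, with the best general bounds being $f(k)\leqslant 64k$ by Alon and $f(k)\leqslant 18k$ by Buci\'{c}). So there is no proof in the paper to compare against, and your proposal does not close the gap either --- as you yourself concede in your final paragraph. Your lower-bound argument is correct and is exactly the one the paper alludes to: the complete symmetric digraph on $2k-1$ vertices has minimum outdegree $2k-2$, and $k$ vertex-disjoint cycles need at least $2k$ vertices, so $f(k)\geqslant 2k-1$. Everything else is a plan, not a proof.

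The concrete gap is the key claim of your inductive step: that one can choose a cycle $C$ (e.g.\ a shortest cycle, possibly optimized over a secondary parameter) such that every vertex $v\notin V(C)$ satisfies $\lvert N^+(v)\cap V(C)\rvert\leqslant 2$. This is unsubstantiated and in general false. The only structural constraint a shortest cycle of length $g$ imposes on an outside vertex $v$ is on \emph{pairs} of arcs in opposite directions: if $v\rightarrow c_i$ and $c_j\rightarrow v$, then the $(c_i,c_j)$-path along $C$ plus these two arcs forms a cycle, forcing its length to be at least $g$. If no vertex of $C$ dominates $v$, this gives no constraint whatsoever, and $v$ may dominate \emph{every} vertex of $C$; deleting $V(C)$ then costs $v$ its entire out-neighborhood on $C$, and the induction collapses. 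This is not a repairable technicality: finding one cycle whose deletion lowers the minimum outdegree by at most $2$ is essentially equivalent to the inductive form of the conjecture itself, and even the settled cases $k=2$ (Thomassen) and $k=3$ (Lichiardopol--P\'{o}r--Sereni; Bai--Manoussakis) require arguments far more elaborate than this. Your closing paragraph correctly diagnoses all of this, which is to say the proposal is an honest description of why the problem is hard, not a proof of the statement.
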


In 1996, via probabilistic arguments,
Alon \cite{A} showed that $f(k)\leqslant 64k$ and obtained the first linear upper bound of $f(k)$.
Buci\'{c} \cite{B} reduced the bound to $18k$ in 2018.
In addition,
Conjecture \ref{conj:BTC} clearly holds for $k=1$.
Thomassen \cite{T} in 1983 and Lichiardopol et al. \cite{LPS} in 2009 
proved the conjecture for $k=2$ and $k=3$, respectively.
The first author of this paper and Manoussakis \cite{BM} gave a much shorter proof for the case of $k=3$ in 2019.
It is widely open for $k\geqslant 4$.

For the function $g(k)$,
one can see from the definition that $g(1)=f(1)=1$.
Lichiardopol \cite{L} showed that $g(2)=4$.
For $k\geqslant 3$,
Lichiardopol \cite{L} conjectured in 2014 that $g(k)$ is finite but
no (finite) upper bound has been obtained for any $k\geqslant 3$ till now.

\begin{conjecture}[Lichiardopol \cite{L}]\label{conj:LC}
$g(k)$ is finite for $k\geqslant 3$.
\end{conjecture}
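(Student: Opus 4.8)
The plan is to prove finiteness by induction on $k$, after strengthening the statement to carry a finite set of forbidden lengths. For a finite set $S\subseteq\mathbb{Z}_{\geq 2}$, let $g(k,S)$ be the least integer $d$ (should it exist) such that every digraph with minimum outdegree at least $d$ contains $k$ vertex-disjoint cycles whose lengths are pairwise distinct and none lies in $S$; the target of Conjecture \ref{conj:LC} is $g(k)=g(k,\emptyset)$. The genuinely new difficulty is visible already at $k=1$: finiteness of $g(1,S)$ for nonempty $S$ is exactly the assertion that sufficiently large minimum outdegree forces a cycle whose length lies outside $S$, a phenomenon with no analogue in the theory of $f(k)$ and the heart of the matter. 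The deletion arguments establishing finiteness of $f(k)$ (Thomassen's $f(k)\leq(k+1)!$ \cite{T}, Buci\'c's $f(k)\leq 18k$ \cite{B}) serve as the template for controlling minimum outdegree under vertex removal, while $g(2)=4$ \cite{L} is the prototype for the bookkeeping.

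Everything would then follow from a single engine lemma: for every finite $S$ there are constants $d_0(S)$ and $c(S)$ such that every digraph $D$ with $\delta^+(D)\geq d_0(S)$ contains a cycle $C$ with $|V(C)|\notin S$ for which $D-V(C)$ still has minimum outdegree at least $\delta^+(D)-c(S)$. Granting it, I would build the $k$ cycles greedily: set $S_0=S$, and having already deleted vertex-disjoint cycles of distinct lengths $\ell_1,\dots,\ell_{i-1}\notin S$, apply the lemma with forbidden set $S_{i-1}=S\cup\{\ell_1,\dots,\ell_{i-1}\}$ to the current digraph to extract a cycle of a genuinely new length $\ell_i$, then delete its vertices. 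Exactly $k$ deletions occur, each lowering the minimum outdegree by at most $c(S_{i-1})$, so a threshold defined by a depth-$k$ recursion in the parameters $|S|$ and $\max S$ suffices, giving finiteness of $g(k,S)$ and hence of $g(k)$. In this way the conjecture reduces cleanly to the engine lemma.

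The engine lemma is the main obstacle, and it pits two demands against each other. First, one must force a cycle whose length avoids the fixed finite set $S$; since large minimum outdegree does \emph{not} bound the girth by a constant — the blow-up $C_m[\overline{K_t}]$ has minimum outdegree $t$ but girth $m$ — this cannot be achieved by merely taking a shortest cycle, and genuinely requires a lower bound of the form ``minimum outdegree $d$ forces cycles of at least $\psi(d)$ distinct lengths'' with $\psi(d)\to\infty$, a statement intertwined with the Caccetta--H\"aggkvist circle of problems and open in general. Second, the avoiding cycle $C$ may be long, and a long cycle can absorb many out-neighbours of the surviving vertices; bounding the drop $c(S)$ forces one to choose $C$ so that few vertices send most of their out-arcs onto $C$, exactly the minimality argument behind the finiteness of $f(k)$, but now constrained to a prescribed set of admissible lengths. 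Reconciling the length constraint with the degree-drop control is precisely what blocks a direct transfer of the $f(k)$ proof.

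For tournaments, the setting of this paper and of the Chen--Chang result, the first demand dissolves: inside any strong component Moon's pancyclicity theorem supplies cycles of every length from $3$ up to the order of the component, so a length outside any fixed finite $S$ is automatically available and the engine lemma's length constraint becomes free. This is why the sub-results forcing two and then three distinct lengths are reachable, and it suggests that the full $k$-disjoint version for tournaments should follow by iterating deletion together with pancyclicity bookkeeping. For arbitrary digraphs I would instead try to supply the distinct-lengths input directly and then combine it with the Buci\'c-type abundance of vertex-disjoint cycles through a patching step that reroutes a short segment of one cycle to change its length by a prescribed amount; making such length-shifting compatible with keeping the cycles vertex-disjoint is, I expect, the decisive difficulty, and is where I would concentrate the effort.
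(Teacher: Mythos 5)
You are attempting Conjecture \ref{conj:LC}, which the paper explicitly leaves open: it states that no finite upper bound on $g(k)$ is known for any $k\geqslant 3$, and its actual theorems concern only tournaments ($h^*(k,3)\leqslant 2k-1$ for $k\geqslant 5$ and $h^*(3,3)=6$). So there is no proof in the paper to compare against, and your text, as you partly concede, is not a proof either: the entire argument is funneled into the ``engine lemma,'' which you leave unproven and which is essentially equivalent in difficulty to the conjecture itself --- a cycle avoiding $S$ whose deletion lowers $\delta^+$ by only a bounded amount would let one iterate exactly as you describe, and none of the known techniques \cite{T,A,B} provides length control of this kind. Reducing an open conjecture to an open lemma is a research program, not a proof.

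Two concrete points inside the sketch are also wrong. First, you misidentify the hard sub-problem: the assertion that minimum outdegree $d$ forces cycles of at least $\psi(d)$ distinct lengths is not open and is unrelated to Caccetta--H\"aggkvist; with $\psi(d)=d$ it follows from a maximal-path argument (the terminal vertex of a non-extendable path has all its out-neighbours on the path, and the $d$ resulting back-arcs close $d$ cycles of pairwise distinct lengths), a fact the paper itself invokes in its concluding remarks. Hence $g(1,S)$ is trivially finite, and the only genuine obstacle is your second demand, disjointness with degree preservation. Second, your greedy recursion is ill-founded even if the engine lemma is granted as stated: your thresholds $d_0(S_i)$ and $c(S_i)$ are allowed to depend on $\max S_i$, but $\max S_i$ includes the lengths $\ell_1,\dots,\ell_i$ already extracted, and these are not bounded by any function of the initial minimum outdegree --- in the blow-up $C_m[\overline{K_t}]$, which you yourself cite, every cycle length is a multiple of $m$, so at fixed $\delta^+=t$ the shortest cycle avoiding a fixed finite $S$ can be arbitrarily long. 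Consequently the depth-$k$ recursion has no finite anchor; you would need $d_0$ and $c$ to depend on $\lvert S\rvert$ alone (the same example rules out the alternative of bounding $\lvert V(C)\rvert$), and nothing in the sketch supplies that. Your closing remark about tournaments is sound --- Moon's theorem (Lemma \ref{thm:Moon}) does dissolve the length constraint there, and that is indeed the mechanism the paper exploits for its tournament results --- but for general digraphs the conjecture remains untouched by your argument.
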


We refer the reader to \cite{BLL,BJHA,HY,L,S,T2,T3,WQY} 
for more results on Conjectures \ref{conj:BTC} and \ref{conj:LC}. 

In this paper,
we concentrate on vertex-disjoint cycles of different lengths
and investigate the following more general function in digraphs.
For two positive integers $k,\ell$ with $k\geqslant \ell$, 
define $h(k,\ell)$ to be the minimum integer such that 
every digraph with minimum outdegree at least $h(k,\ell)$ 
contains $k$ vertex-disjoint cycles in which $\ell$ of them have different lengths.
It is not difficult to check that the following holds. 

\begin{fact}
$f(k)=h(k,1)$ and $g(k)=h(k,k)$.
\end{fact}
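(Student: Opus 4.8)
The plan is to read both equalities off the definitions, since each says only that the configuration forced by $h(k,\ell)$ degenerates to the one forced by $f(k)$ or by $g(k)$ at an endpoint of the range $1\leqslant\ell\leqslant k$. As $f$, $g$, and $h$ are all defined as the least minimum-outdegree threshold forcing a prescribed family of cycles, it suffices for each endpoint to check that the two prescribed configurations are equivalent for every digraph; equivalence of the configurations forces equality of the thresholds, because the set of thresholds valid for one property is then literally the set valid for the other.

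To obtain $f(k)=h(k,1)$ I would unpack the phrase ``$k$ vertex-disjoint cycles in which $\ell$ of them have different lengths'' at $\ell=1$. Singling out one cycle and requiring it to have ``pairwise different lengths'' is vacuous, a lone cycle admitting no length comparison. Hence a digraph realizes the $h(k,1)$-configuration exactly when it contains $k$ vertex-disjoint cycles with no length restriction, which is precisely the $f(k)$-configuration; the two defining properties hold for the same digraphs, so their minima agree. The argument for $g(k)=h(k,k)$ is identical at $\ell=k$: there the phrase requires all $k$ of the cycles to be pairwise of distinct lengths, which is verbatim the $g(k)$-configuration, and again the two properties coincide on every digraph.

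The only point needing care---and the closest thing to an obstacle---is to fix the intended reading of ``$\ell$ of them have different lengths'' as ``some $\ell$ of the $k$ cycles are pairwise of distinct lengths.'' Under this reading the requirement is monotone in $\ell$, giving $f(k)=h(k,1)\leqslant h(k,2)\leqslant\cdots\leqslant h(k,k)=g(k)$, a chain consistent with the already-noted inequality $f(k)\leqslant g(k)$. Once this convention is pinned down the two equalities are immediate, requiring only the definitions and no combinatorial input about digraphs or tournaments.
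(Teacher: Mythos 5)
Your proposal is correct and matches the paper's treatment: the paper offers no written proof of this Fact (it merely remarks that it ``is not difficult to check''), and your argument is exactly the routine definitional unpacking being alluded to --- at $\ell=1$ the distinct-lengths condition on a single cycle is vacuous, recovering $f(k)$, and at $\ell=k$ it is verbatim the condition defining $g(k)$. Your added observation that the property is monotone in $\ell$, giving $f(k)=h(k,1)\leqslant\cdots\leqslant h(k,k)=g(k)$, is a correct and harmless bonus consistent with the paper's remark that $f(k)\leqslant g(k)$.
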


In other words,
Conjectures \ref{conj:BTC} and \ref{conj:LC} can be reconsidered as 
establishment problems of two special values of $h(k,\ell)$.
Here we need to mention that this function is motivated 
by the problem investigated by Chen and Chang in \cite{CC}.

A \textit{tournament} is a digraph satisfying that 
there exists exactly one arc between every two distinct vertices,
or equivalently,
a tournament is an orientation of a complete graph.
The class of tournaments plays a very important role in digraph theory.
For convenience,
we denote the corresponding $h(k,\ell)$ in tournaments 
by $h^*(k,\ell)$ in the rest of this paper.

Bang-Jensen et al. \cite{BBT} proved that $h^*(k,1)\leqslant 2k-1$ in 2014.
Bensmail et al. \cite{BJHA} proved that $\frac{k^2+5k-2}{4}\leqslant h^*(k,k)\leqslant \frac{k^2+4k-3}{2}$ in 2017.
Tan \cite{T5} showed in 2021 that a strongly connected tournament 
with minimum outdegree at least 3
contains no two disjoint cycles of different lengths 
if and only if it is isomorphic to a specific tournament with minimum outdegree 3. 
It therefore implies that $h^*(2,2)=4$.
Recently,
Chen and Chang \cite{CC} showed that $h^*(k,2)\leqslant 2k-1$ for $k\geqslant 3$.
In this paper, 
we show that $h^*(k,3)\leqslant 2k-1$ for $k\geqslant 5$
and, by answering a question of Chen and Chang, $h^*(3,3)=6$.

\begin{theorem}\label{thm:main}
Every tournament with minimum outdegree at least $2k-1$ contains $k$ disjoint cycles in which three of them have different lengths, here $k\geqslant 5$ is an integer.
\end{theorem}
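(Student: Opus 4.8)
The plan is to reduce the problem, via pancyclicity, to finding $k$ vertex-disjoint cycles whose length sequence dominates $(3,\dots,3,4,5)$, and then to build such a family by upgrading triangles. I would first record the elementary but decisive bound $\delta^+(T)\le (n-1)/2$, which together with the hypothesis gives $n\ge 4k-1$; since $k\ge 5$ this is comfortably larger than the $3k+3$ vertices that any three cycles of lengths $3,4,5$ together with $k-3$ further triangles must occupy, so there is no vertex-count obstruction (this is also exactly the point where small $k$ breaks down). Next I would invoke the classical theorem of Moon that every strongly connected tournament is vertex-pancyclic: because each cycle $C_i$ in a disjoint family spans a strong subtournament, I may freely replace $C_i$ by a cycle of any length in $\{3,\dots,|C_i|\}$ on the same vertices. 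Hence a family $\{C_1,\dots,C_k\}$ already produces three distinct lengths unless, with lengths sorted as $\ell_{(1)}\ge\cdots\ge\ell_{(k)}$, one has $\ell_{(1)}\le 4$ or $\ell_{(2)}\le 3$; equivalently, the task becomes to exhibit $k$ vertex-disjoint strongly connected vertex sets, all of size at least $3$, one of size at least $4$, and one of size at least $5$.

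For the construction I would start from $k$ vertex-disjoint triangles $T_1,\dots,T_k$, which exist since $h^*(k,1)\le 2k-1$ and, by pancyclicity, any disjoint family may be shortened to triangles. Writing $R$ for the set of the remaining $n-3k\ge k-1\ge 4$ vertices, the goal is to grow two of the triangles, using vertices of $R$, into a strongly connected set of size $4$ and a disjoint strongly connected set of size $5$, leaving the other $k-2$ triangles untouched; the resulting $k$ disjoint strong sets then yield, by Moon's theorem, cycles of three different lengths. A single vertex $r\in R$ enlarges a triangle $A$ to a strong $4$-set precisely when $r$ has both an out-neighbour and an in-neighbour in $A$, and a similar mixed-adjacency condition lets a pair from $R$ enlarge a triangle to a strong $5$-set; since $k\ge 5$ leaves at least $k-2\ge 3$ spare triangles and at least $k-1\ge 4$ reservoir vertices, there is ample room to perform and to re-select these upgrades.

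The crux, and the step I expect to be hardest, is the case in which every such upgrade is blocked, that is, when the reservoir vertices are uniformly oriented with respect to the triangles (each vertex of $R$ dominates, or is dominated by, each triangle it meets). Here I would argue by degree counting that such uniformity cannot persist. In the extreme where every vertex of $R$ beats every triangle vertex and $R$ induces no cycle, every cycle of $T$ must lie inside the $3k$-vertex set $W=\bigcup_i V(T_i)$, so each $a\in W$ has all of its $\ge 2k-1$ out-neighbours inside $W$; but then $\delta^+(T[W])\ge 2k-1>(3k-1)/2$, contradicting the degree bound in the $3k$-vertex tournament $T[W]$. The real work is to promote this clean contradiction to the general partially blocked situation: by re-selecting which triangles and which reservoir vertices play the special roles, and by tracking how many out-arcs each triangle sends into $R$, one forces the existence of at least one triangle that can be grown to a strong $4$-set and a second, disjoint one that can be grown to a strong $5$-set. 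This local rerouting analysis, which must also absorb the few tight arc-patterns responsible for the sharpness of $h^*(3,3)=6$, is where the hypothesis $k\ge 5$ is genuinely needed and where I expect the bulk of the technical effort to lie.
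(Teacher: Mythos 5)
Your framework is sound and in fact close in spirit to the paper's: both arguments rest on Moon's vertex-pancyclicity to reduce everything to finding disjoint strong vertex sets of the right sizes, both try to upgrade an existing disjoint collection using leftover vertices, and both fall back on outdegree counting when upgrades are obstructed. But there is a genuine gap, and you name it yourself: the entire analysis of the ``partially blocked'' situation is deferred (``this is where I expect the bulk of the technical effort to lie''). That deferred part \emph{is} the proof. In the paper it occupies eight claims plus a multi-page case analysis: one must show that the leftover set induces an acyclic tournament, establish arc-count bounds between the cycles and the leftover set, rule out the regular case, and then run counting arguments that repeatedly reduce to the delicate values $k=5,6,7$, each of which is killed only by explicit cycle constructions. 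Your extreme case (all of $R$ dominating $W$ with $R$ acyclic) is correct but is the single easiest configuration; in the general blocked situation different reservoir vertices relate differently to different triangles, and ``re-selecting which triangles and which reservoir vertices play the special roles'' together with ``ample room'' is a hope, not an argument --- precisely for $k=5,6,7$ the counting is tight and the paper needs the explicit subcases. Moreover, your characterization of the blocked case is not even correct as stated: uniform orientation of every reservoir vertex with respect to every triangle blocks the $4$-set upgrades but \emph{not} the $5$-set upgrades, since a triangle $A$ with $r_1\rightarrow A$, $A\rightarrow r_2$ and $r_2\rightarrow r_1$ spans a strong $5$-set (indeed a $5$-cycle $r_1,a_1,a_2,a_3,r_2,r_1$). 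So the configurations you must actually exclude carry extra structure (if $r_1\rightarrow A$ and $A\rightarrow r_2$ then $r_1\rightarrow r_2$, etc.), and exploiting exactly this kind of propagation is what the paper's Claims 1 and 2 do; your proposal never gets there.

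A secondary strategic point: you start from $k$ disjoint triangles (via $h^*(k,1)\leqslant 2k-1$, Bang-Jensen--Bessy--Thomass\'{e}) and therefore must perform \emph{two} independent upgrades, to a strong $4$-set and a disjoint strong $5$-set. The paper instead starts from Chen--Chang's $h^*(k,2)\leqslant 2k-1$, i.e.\ $k-1$ triangles plus a $4$-cycle, so only one further length has to be created. Choosing the weaker starting point makes the blocked-case analysis you deferred strictly harder, since the counting must simultaneously produce two upgrades that avoid each other; if you rewrite the argument, begin from the Chen--Chang collection as the paper does.
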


\begin{theorem}\label{thm:main2}
Every tournament with minimum outdegree at least $6$ contains three disjoint cycles with different lengths and the minimum outdegree condition is sharp.
\end{theorem}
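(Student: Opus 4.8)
The plan is to prove the two halves of $h^*(3,3)=6$ separately, starting with the easy direction. For the lower bound (sharpness) I would exhibit a single tournament of minimum outdegree $5$ with no three vertex-disjoint cycles of pairwise different lengths. The natural witness is a regular tournament $R$ on $11$ vertices (for instance the quadratic-residue tournament), in which every vertex has outdegree $(11-1)/2=5$. Since any three vertex-disjoint cycles with pairwise different lengths occupy at least $3+4+5=12$ vertices while $|V(R)|=11$, no such triple can exist. Hence the hypothesis $\delta^+\geqslant 6$ cannot be weakened to $\delta^+\geqslant 5$, which gives both sharpness and $h^*(3,3)\geqslant 6$.

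For the upper bound, let $T$ satisfy $\delta^+(T)\geqslant 6$. First I would reduce to the strongly connected case: the terminal strong component $T'$ of $T$ receives no arcs from outside, so each of its vertices keeps all of its $\geqslant 6$ out-neighbours inside $T'$; thus $T'$ is strongly connected with $\delta^+(T')\geqslant 6$ and $|V(T')|\geqslant 13$, and three disjoint cycles of different lengths in $T'$ are such in $T$. On $T'$ I would invoke Chen and Chang's bound $h^*(3,2)\leqslant 5\leqslant 6$ \cite{CC} to obtain three vertex-disjoint cycles $C_1,C_2,C_3$ among which at least two, say $C_1$ and $C_2$, already have different lengths. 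If $|C_3|$ differs from both we are done, so the remaining task is the tie-breaking case in which only two distinct lengths occur, say $|C_3|=|C_1|=a\neq|C_2|=b$.

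To break such a tie I would exploit the surplus of the hypotheses over Chen and Chang's (the extra outdegree, and $|V(T')|\geqslant 13$) through two local moves. If a spare vertex $w\notin V(C_1)\cup V(C_2)\cup V(C_3)$ exists and has both an in-neighbour and an out-neighbour on $C_3$, then some consecutive pair on $C_3$ satisfies $x_i\to w\to x_{i+1}$, and rerouting through $w$ lengthens $C_3$ to $a+1$, producing the distinct lengths $a,b,a+1$ (shortening $C_3$ instead when $b=a+1$). When no such insertion is possible, I would instead re-split a repeated-length pair: using the fact that strongly connected tournaments are vertex-pancyclic \cite{BG} together with Tan's characterization \cite{T5} of the unique strong tournament $T_0$ with $\delta^+=3$ admitting no two disjoint cycles of different lengths, I would argue that the sub-tournament on $V(C_1)\cup V(C_3)$, or the terminal strong component of $T'$ after deleting one short cycle, can be repartitioned into two disjoint cycles of different lengths, which together with $C_2$ yield three distinct lengths after a short check against $b$.

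The hard part will be precisely the exceptional configurations in which neither move applies: no insertable spare vertex is available and every sub-tournament that one tries to re-split is isomorphic to Tan's $T_0$ or otherwise forces the repeated length. Such configurations are highly structured and sit close to the extremal regular tournaments (the $11$-vertex example and $T_0$), so I expect them to be eliminated by a direct finite analysis that repeatedly uses the two inequalities these extremal examples violate, namely $\delta^+(T')\geqslant 6$ and $|V(T')|\geqslant 13$. Controlling the three lengths to be \emph{pairwise} distinct under the tight budget $3+4+5>6$, rather than merely producing three disjoint cycles, is the delicate point that the whole argument must be organized around.
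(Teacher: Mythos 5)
Your sharpness argument is correct and is essentially the paper's own lower-bound reasoning (given in its concluding section): three vertex-disjoint cycles of pairwise different lengths need at least $3+4+5=12$ vertices, while a regular tournament on $11$ vertices has minimum outdegree $5$, so $h^*(3,3)\geqslant 6$. (One small slip in your reduction: the strong component that keeps all out-neighbours of its vertices is the one that \emph{sends} no arcs to the rest of the tournament, not the one that \emph{receives} none; the reduction itself is standard and fine.)

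The upper bound, however, has a genuine gap: the entire difficulty of the theorem is concentrated in the configuration you defer to ``a direct finite analysis,'' and both of your local moves demonstrably fail exactly there. After normalizing with Moon's theorem, the tie case is two $3$-cycles $C_1,C_3$ and one $4$-cycle $C_2$, i.e. $a=3$, $b=4$ in your notation. Inserting a spare vertex $w$ into $C_3$ produces a $4$-cycle, giving lengths $3,4,4$ again; your fallback of shortening $C_3$ is impossible since $a=3$; and Tan's characterization applied to (the sink strong component of) $T'-V(C_1)$, which has minimum outdegree at least $6-3=3$, only guarantees two disjoint cycles of \emph{different} lengths, one of which may again be a $3$-cycle, reproducing the pattern $3,3,\ell$. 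So none of the moves you describe is guaranteed to break the $\{3,3,4\}$ tie, and no concrete mechanism is offered for the ``exceptional configurations.'' This is precisely where the paper's proof does all of its work: it takes a $6$-outdegree-critical counterexample (hence strongly connected, and every vertex is dominated by a vertex of outdegree exactly $6$), fixes a Hamiltonian path $x_1,\ldots,x_t$ of the subtournament on the vertices $X$ outside the three cycles, and establishes a chain of structural claims (the analogues of Claims 1--5 and 7 of Section 3, plus two new ones: $T_n$ is not $6$-regular, and no $3$-cycle has out-neighbours in $\{x_1,\ldots,x_{t-2}\}$), finishing with arc-counting between $Y$, $Z$ and $X$ that forces a good collection. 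Nothing in your proposal replaces this machinery, so as written it is a plan rather than a proof.
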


The rest of this paper is organized as follows. 
In Section \ref{section:pre} we introduce some necessary terminology and notations.
Sections \ref{section:proof} and \ref{section:proof1}
are devoted to the proofs of Theorems \ref{thm:main} and \ref{thm:main2}, respectively.
The final section concludes with some remarks and open problems.

\section{Additional terminology and notation}
\label{section:pre}

If $a=(u,v)$ is an arc of a digraph $D$, 
then we say that $u$ \emph{dominates} $v$ and denote it by $u\rightarrow v$.
For two vertex subsets $X,Y$ of $D$, 
we denote the set of arcs from $X$ to $Y$ by $A(X,Y)$,
and write $X\rightarrow Y$ if every vertex of $X$ dominates all vertices of $Y$.
The vertices which dominate a vertex $v$ are its \emph{in-neighbors}, 
those which are dominated by $v$ are its \emph{out-neighbors},
which are denoted by $N_D^-(v)$ and $N_D^+(v)$.
The \emph{indegree} (\emph{outdegree}) is the cardinality of the $N_D^-(v)$ ($N_D^+(v)$), denoted by $d^-_D(v)$ ($d^+_D(v)$), respectively.
The minimum indegree and outdegree of $D$ are denoted by $\delta ^-(D)$ and $\delta ^+(D)$, respectively; the maximum indegree and outdegree of $D$ are denoted by $\Delta^-(D)$ and $\Delta^+(D)$, respectively.

A digraph $D$ is \emph{$r$-diregular} 
if  $d^-_D(v)=d^+_D(v)=r$ for every vertex $v\in V(D)$.
For a set $W\subseteq V$, $N^+_D(W)$ ($N^-_D(W)$) consists of those vertices from $V-W$ which are out-neighbors (in-neighbors) of at least one vertex from $W$.
A digraph $H$ is a \emph{subdigraph} of a digraph $D$ if $V(H)\subseteq V(D)$, $A(H)\subseteq A(D)$ and every arc in $A(H)$ has both end-vertices in $V(H)$.
If every arc of $A(D)$ with both end-vertices in $V(H)$ is in $A(H)$, 
then we say that $H$ is induced by $X=V(H)$ ($H=D[X]$) 
and call $H$ an \emph{induced subdigraph} of $D$.
A digraph $D$ is \emph{acyclic} if it has no cycle.
If $H$ is an induced subdigraph of tournament $T$, 
we say that $H$ is a \emph{subtournament} of $T$.
In a tournament $T$, if $u\rightarrow v$ and $v\rightarrow w$ can imply that $u\rightarrow w$, 
then $T$ is a \emph{transitive tournament}. 
Note that the acyclic tournament is the unique transitive tournament.

In a digraph $D$, 
a vertex $x$ is \emph{reachable} to a vertex $y$ 
if there exists an $(x, y)$-path in $D$.
In particular, a vertex is reachable to itself.
A digraph $D$ is \emph{strongly connected} if 
there exists an $(x, y)$-path
for every two vertices $x$, $y$ in $D$.

A \emph{strong component} of a digraph $D$ 
is a strongly connected induced subdigraph of $D$ with maximal number of vertices. 
The \emph{strong component digraph} $SC(D)$ of $D$ is obtained by contracting all the strong components of $D$ and deleting every parallel arcs obtained in this process.
In other words, 
if $D_1$,\ldots,$D_t$ are all the strong components of $D$, 
then 
\[
V(SC(D))=\{v_1, \ldots, v_t\},~
A(SC(D))=\{v_iv_j : A(V(D_i), V(D_j))\neq\emptyset\}.
\]
Note that the subdigraph induced by the vertices of a cycle in $D$ is strongly connected.
Thus $SC(D)$ is acyclic.
It is not difficult to check that 
the strong component digraph of a tournament is a transitive tournament.

A digraph $D$ is \emph{vertex-pancyclic} if, 
for every vertex $v$ of $D$ and 
for every positive integer $3\leqslant \ell\leqslant n$, 
there is a cycle of length $\ell$ passing through $v$.
In 1966, Moon \cite{M} showed the following classical result on tournaments, 
which will play an important role in our proofs.

\begin{lemma}[Moon \cite{M}]\label{thm:Moon}
Every strongly connected tournament is vertex-pancyclic.
\end{lemma}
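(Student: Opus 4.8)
\section*{Proof proposal}

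The plan is to prove the statement by induction on the cycle length, keeping the prescribed vertex on the cycle at every stage. Fix a strongly connected tournament $T$ on $n$ vertices and a vertex $v$. I will show that for each $\ell$ with $3\leqslant \ell\leqslant n$ there is a cycle of length exactly $\ell$ through $v$, by inducting on $\ell$ and, at each step, upgrading an $\ell$-cycle through $v$ to an $(\ell+1)$-cycle through $v$.

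For the base case $\ell=3$, I would use strong connectivity to produce a triangle on $v$. Since $T$ is strong, both $N^+(v)$ and $N^-(v)$ are nonempty, and it suffices to find an arc from $N^+(v)$ to $N^-(v)$: such an arc $a\rightarrow b$ with $a\in N^+(v)$ and $b\in N^-(v)$ yields the triangle $v\rightarrow a\rightarrow b\rightarrow v$. If no such arc existed, then every arc between the two sets would point from $N^-(v)$ to $N^+(v)$, so $v$ could not reach any of its in-neighbors, contradicting strong connectivity.

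For the inductive step, suppose $C=v_1v_2\cdots v_\ell v_1$ is a cycle of length $\ell<n$ with $v=v_1$, and let $R=V(T)\setminus V(C)\neq\emptyset$. If some $w\in R$ admits an index $i$ with $v_i\rightarrow w\rightarrow v_{i+1}$, then inserting $w$ between $v_i$ and $v_{i+1}$ gives an $(\ell+1)$-cycle that retains all of $C$, and hence passes through $v$. Otherwise no vertex of $R$ can be inserted into an arc of $C$; a short propagation argument around $C$ (if $v_i\rightarrow w$ forces $v_{i+1}\rightarrow w$) then shows that each $w\in R$ either dominates all of $C$ or is dominated by all of $C$, so $R$ splits as $R=S\cup T'$ with $S\rightarrow V(C)$ and $V(C)\rightarrow T'$ (I write $T'$ to avoid clashing with the tournament $T$). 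Strong connectivity forces both $S$ and $T'$ to be nonempty, and tracing a path by which $C$ reaches $S$ yields an arc $t\rightarrow s$ with $t\in T'$ and $s\in S$.

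Finally, with such $s$ and $t$ in hand I would reroute $C$: choosing any index $j\neq 1$, the path $v_{j+1}\rightarrow v_{j+2}\rightarrow\cdots\rightarrow v_{j-1}$ along $C$ (skipping only $v_j$, indices mod $\ell$) closes up through $v_{j-1}\rightarrow t\rightarrow s\rightarrow v_{j+1}$, using $(V(C)\setminus\{v_j\})\cup\{s,t\}$ and thus having length $\ell+1$ while still containing $v=v_1$. I expect the main obstacle to be precisely this non-insertable case: one must argue carefully that strong connectivity guarantees both classes $S,T'$ are nonempty and that the crossing arc runs from $T'$ to $S$, and then perform the reroute so that the prescribed vertex $v$ is never the dropped vertex $v_j$.
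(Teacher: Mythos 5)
The paper does not prove this lemma at all: it is quoted as a known result of Moon (1966) and used as a black box, so there is no internal proof to compare against. Your argument is correct and complete, and it is essentially Moon's classical proof --- the triangle base case via an arc from $N^+(v)$ to $N^-(v)$, the insertion/propagation dichotomy in the inductive step, the partition of the outside vertices into a set $S$ dominating $C$ and a set $T'$ dominated by $C$ (both nonempty by strong connectivity), the crossing arc $t\rightarrow s$ from $T'$ to $S$, and the reroute that drops a single vertex $v_j\neq v$ to keep the length at exactly $\ell+1$ while retaining $v$.
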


To simplify the proof process,
we define a new useful concept.
Let $r$ be a positive integer.
Call a digraph \textit{$r$-outdegree-critical} 
if it has minimum oudegree $r$ and has minimal number of vertices, i.e., 
the deletion of any vertex decreases the minimum outdegree.
The following lemma will be used several times in our proofs.

\begin{lemma}\label{th:th2.4}
Every $r$-outdegree-critical tournament is strongly connected.
\end{lemma}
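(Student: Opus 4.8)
The plan is to argue by contradiction: I assume that a given $r$-outdegree-critical tournament $T$ is not strongly connected and then exhibit a vertex whose deletion keeps the minimum outdegree at least $r$, contradicting the criticality hypothesis. The first step is to recast criticality into a directly usable form. Since $\delta^+(T)=r$ and deleting any vertex $v$ must push the minimum outdegree below $r$, there has to be a vertex $u$ whose outdegree strictly drops under this deletion and lands below $r$; this can only happen if $u\rightarrow v$ and $d^+_T(u)=r$. Thus I would record the reformulation that will do all the work: \emph{every vertex of $T$ has an in-neighbor of outdegree exactly $r$}.

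The second step brings in the structure of a non-strong tournament. As noted in the excerpt, $SC(T)$ is a transitive tournament, so the strong components of $T$ can be linearly ordered as $D_1,\ldots,D_t$ with $t\geqslant 2$ and $D_i\rightarrow D_j$ whenever $i<j$. I exploit the two extreme components. For the terminal component $D_t$, no vertex has an out-neighbor outside $D_t$, so each $v\in D_t$ satisfies $d^+_T(v)=d^+_{D_t}(v)\leqslant |D_t|-1$; since $\delta^+(T)=r$, this forces $|D_t|\geqslant r+1$.

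The third step applies the reformulation to the initial component $D_1$. Every in-neighbor of a vertex $v\in D_1$ must lie in $D_1$, because no arc enters $D_1$ from outside. Hence the in-neighbor of outdegree $r$ guaranteed by criticality is some $u\in D_1$ with $d^+_T(u)=r$. As $u$ dominates all $|V(T)|-|D_1|$ vertices outside $D_1$, I obtain $r=d^+_T(u)\geqslant |V(T)|-|D_1|\geqslant |D_t|\geqslant r+1$, which is the desired contradiction. The degenerate possibility that $D_1$ is a single vertex is subsumed here, since then its vertex can have no in-neighbor inside $D_1$ at all, already violating criticality.

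I expect the only delicate point to be the correct translation of criticality into the statement that each vertex has an in-neighbor of outdegree exactly $r$; once that reformulation is secured, the counting estimate played off against the terminal component is routine. It will be worth stating the boundary case of a trivial source component explicitly, to confirm it is handled rather than silently excluded.
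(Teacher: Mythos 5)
Your proof is correct, but it takes a genuinely different route from the paper's. The paper argues directly: given two vertices with $v_1\rightarrow v_2$, it exhibits a $(v_2,v_1)$-path of length at most $3$, using the same key reformulation you isolate (criticality means every vertex is dominated by some vertex of outdegree exactly $r$) to find a vertex $u$ of outdegree $r$ dominating $v_1$, and then an out-neighbor of $v_2$ dominating $u$. You instead argue by contradiction through the condensation: ordering the strong components $D_1,\ldots,D_t$ of a non-strong $T$ so that $D_i\rightarrow D_j$ for $i<j$, you play the initial component against the terminal one --- the terminal component must satisfy $|D_t|\geqslant r+1$ because its vertices have all their out-neighbors inside it, while criticality forces $D_1$ to contain a vertex of outdegree exactly $r$ that dominates every vertex outside $D_1$, giving $r\geqslant |V(T)|-|D_1|\geqslant |D_t|\geqslant r+1$. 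Both arguments hinge on the identical translation of criticality, and both are sound; the paper's direct argument is self-contained and yields the small bonus that any two vertices are joined by a path of length at most $3$, whereas yours is more structural, reuses the strong component digraph already introduced in Section~\ref{section:pre}, and shows in passing that the terminal component of any putative non-strong example would need more than $r$ vertices. Your explicit treatment of the degenerate case $|D_1|=1$ is also correct: the required in-neighbor of outdegree $r$ cannot exist inside a singleton source component.
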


\begin{proof}
Let $T$ be an $r$-outdegree-critical tournament
and let $v_1, v_2$ be two arbitrary distinct vertices of $T$.
Assume without loss of generality that $v_1\rightarrow v_2$,
it suffices to show that $v_2$ is reachable to $v_1$.
If there is a vertex $w\in N_T^{+}(v_2)$ with $w\rightarrow v_1$,
then $v_2$ is reachable to $v_1$ through the path $P_1=(v_2,w,v_1)$.
Otherwise, $v_1\rightarrow \{v_2\}\cup N_T^{+}(v_2)$.
By the definition of outdegree-critical,
every vertex is dominated by some vertex having minimum outdegree.
Thus there exists a vertex $u\notin N_T^+(v_2)$ 
with $u\rightarrow v_1$ and $d^{+}_{T}(u)=r$.
It follows that $u$ has at most $r-1$ outneighbors in $N_T^{+}(v_2)$.
Since $|N_T^{+}(v_2)|\geqslant r$, 
there exists a vertex $w'\in N_T^{+}(v_2)$ with $w'\rightarrow u$.
Now $v_2$ is reachable to $v_1$ through the path $P_2=(v_2,w',u,v_1)$.
\end{proof}

\section{Proof of Theorem \ref{thm:main}}
\label{section:proof}

We shall present the proof by contradiction.
For convenience,
call a collection of $k$ vertex-disjoint cycles \emph{good} 
if three of them have different lengths.
Suppose that $T_n$ is a $(2k-1)$-outdegree-critical tournament on $n$ vertices but contains no good collection of vertex-disjoint cycles.
Chen and Chang \cite{CC} showed that $h^*(k,2)\leqslant 2k-1$.
Thus $T_n$ contains a collection of $k$ vertex-disjoint cycles, say $\mathcal{C}=\{C_1, \ldots, C_k\}$, 
in which two of them have different lengths.
Without loss of generality, assume that 
$|V(C_1)|=\cdots=|V(C_{k-1})|=3$ and $|V(C_k)|=4$.
Denote by $C_{k}=(z_1,z_2,z_3,z_4,z_1)$
and $C_i=(y_0^i,y_1^i,y_2^i,y_0^i)$ for $1\leqslant i\leqslant k-1$, .
Up to isomorphism,
assume that $z_1\rightarrow z_3$ and $z_2\rightarrow z_4$.
Let $V(\mathcal{C})=V(C_1)\cup \ldots\cup V(C_{k})$, $X=V(T_n)\backslash V(\mathcal{C})$, $Y=V(\mathcal{C})\backslash V(C_k)$ and $Z=V(C_k)$.
It is not difficult to see that
$\lvert V(\mathcal{C})\rvert=3k+1$
and $\lvert X\rvert=n-3k-1$,
denote $\lvert X\rvert$ by $t$.

Let $P=x_1, x_2, \ldots, x_t$ be a Hamiltonian path in $T[X]$, where $x_t\in V(X_s)$ is a vertex with the minimum outdegree in $X_s$.
The desired contradiction will appear after eight claims.

\begin{claim}\label{cl:cl1}
For every two distinct vertices $x_p,x_q\in X$ with $p<q$
and for every $1\leqslant i\leqslant k-1$, 
if $\lvert A(V(C_i),x_p)\rvert \geqslant 1$, 
then $V(C_i)\rightarrow x_q$;
if $\lvert A(x_q,V(C_i))\rvert \geqslant 1$, 
then $x_p\rightarrow V(C_i)$.
\end{claim}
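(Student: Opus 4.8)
The plan is to prove both halves of the claim at once, since they forbid the same configuration. In a tournament the conclusion $V(C_i)\to x_q$ fails exactly when $|A(x_q,V(C_i))|\geq 1$, and the conclusion $x_p\to V(C_i)$ fails exactly when $|A(V(C_i),x_p)|\geq 1$; hence each of the two implications is equivalent to the single assertion that, for $p<q$, one cannot simultaneously have $|A(V(C_i),x_p)|\geq 1$ and $|A(x_q,V(C_i))|\geq 1$. So I would argue by contradiction and assume there exist $y_a^i,y_b^i\in V(C_i)$ (possibly equal) with $y_a^i\to x_p$ and $x_q\to y_b^i$.

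From this configuration I would manufacture one long cycle out of $C_i$ together with the path segment. Consider $H=T_n[V(C_i)\cup\{x_p,x_{p+1},\dots,x_q\}]$. The Hamiltonian path $P$ supplies the arcs $x_j\to x_{j+1}$ for $p\leq j<q$, and combined with $y_a^i\to x_p$, $x_q\to y_b^i$ and the triangle $C_i$, the closed walk
\[
x_p\to x_{p+1}\to\cdots\to x_q\to y_b^i\to\cdots\to y_a^i\to x_p,
\]
whose middle stretch runs around $C_i$, shows that $\{x_p,\dots,x_q,y_a^i\}$ lies in a single strong component of $H$; since $y_a^i$ lies on the cycle $C_i$, the remaining two vertices of $C_i$ are absorbed into that same component, so $H$ is strongly connected. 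As $q>p$, we have $|V(H)|=(q-p+1)+3\geq 5$, so Lemma \ref{thm:Moon} makes $H$ vertex-pancyclic; in particular $H$ contains a cycle $C'$ with $|V(C')|=5$.

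Finally I would exchange $C_i$ for $C'$. Since $V(C')\subseteq V(C_i)\cup X$, the cycle $C'$ is disjoint from every $C_j$ with $j\neq i$, so $(\{C_1,\dots,C_{k-1}\}\setminus\{C_i\})\cup\{C',C_k\}$ is a family of exactly $k$ vertex-disjoint cycles. Using $k\geq 5$, the $k-2\geq 3$ surviving triangles contribute length $3$, $C_k$ contributes length $4$, and $C'$ contributes length $5$, so the family is good. This contradicts the standing assumption that $T_n$ has no good collection, which proves the claim.

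The one point needing care is the strong connectivity of $H$ when $a=b$, where the displayed walk meets only a single vertex of $C_i$; here it is precisely the triangle structure of $C_i$ that pulls its other two vertices into the same strong component. Everything else is routine: Moon's pancyclicity delivers the length-$5$ cycle, and the exchange keeps the total count at $k$ while introducing a third distinct cycle length.
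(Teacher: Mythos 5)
Your proof is correct and follows essentially the same route as the paper's: assume the forbidden configuration, observe that $T_n[V(C_i)\cup\{x_p,\ldots,x_q\}]$ is a strong subtournament on at least $5$ vertices, extract a $5$-cycle via Lemma \ref{thm:Moon}, and swap it for $C_i$ to obtain a good collection. Your extra details (the unification of the two implications into one forbidden configuration and the explicit strong-connectivity argument, including the $a=b$ case) only make explicit what the paper leaves implicit.
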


\begin{proof}
Suppose that $\lvert A(V(C_i),x_p)\rvert \geqslant 1$ but $\lvert A(x_q,V(C_i))\rvert \geqslant 1$ for some $1\leqslant i\leqslant k-1$.
This implies that $T[V(C_i)\cup\{x_p,\ldots,x_q\}]$ is a strong subtournament of $T_n$ with at least $5$ vertices,
which contains a $5$-cycle $C_i^{*}$ by Lemma \ref{thm:Moon}.
It follows that $\mathcal{C}^*=\{C_1,\ldots,C_{i-1},C_i^{*},C_{i+1},\ldots,C_k\}$ is a good collection, a contradiction.
Similarly, we can show that for every $1\leqslant i\leqslant k-1$
if $\lvert A(x_q,V(C_i))\rvert \geqslant 1$ then $x_p\rightarrow V(C_i)$.
\end{proof}

Now we consider the strong component digraph $SC(T[X])$,
which is obtained by contracting all the strong components $X_1, X_2, \ldots, X_r$ of $X$.

\begin{claim}\label{cl:cl2}
$T[X]$ is acyclic.
\end{claim}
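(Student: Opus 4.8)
The plan is to argue by contradiction: I assume $T[X]$ contains a cycle and manufacture a good collection, contradicting the choice of $T_n$. Since a tournament that is not acyclic has a non-trivial strong component, $T[X]$ contains a strong component $X_j$ with $|X_j|\ge 3$, and by Lemma~\ref{thm:Moon} this $X_j$ is vertex-pancyclic, so it has cycles of every length between $3$ and $|X_j|$. The guiding principle throughout is that \emph{any $5$-cycle vertex-disjoint from $C_k$ and from at least $k-2$ of the triangles yields a good collection}: together with $C_k$ (length $4$) and $k-2$ triangles (length $3$) it gives $k$ disjoint cycles with three distinct lengths $3,4,5$. Here $k\ge 5$ guarantees that enough triangles survive this operation.

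First I dispose of the large case. If $|X_j|\ge 5$, then $X_j$ contains a $5$-cycle $C'$, which lies entirely in $X$ and is therefore disjoint from all of $C_1,\dots,C_k$. Then $\{C_1,\dots,C_{k-2},C_k,C'\}$ is a good collection, a contradiction. Hence every non-trivial strong component of $T[X]$ has exactly $3$ or $4$ vertices.

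Next I reduce the small case to an interaction statement. Fix $X_j$ with $|X_j|\in\{3,4\}$ and consider any triangle $C_i$, $1\le i\le k-1$. If there are arcs in both directions between $X_j$ and $V(C_i)$, then $T[V(C_i)\cup X_j]$ is the union of two strongly connected subtournaments joined by arcs in both directions, hence is itself strongly connected on $3+|X_j|\ge 6$ vertices; by Lemma~\ref{thm:Moon} it contains a $5$-cycle $C'$. As $C'$ avoids $C_k$ and the triangles other than $C_i$, the family $\bigl(\{C_1,\dots,C_{k-1}\}\setminus\{C_i\}\bigr)\cup\{C_k,C'\}$ is again a good collection, a contradiction. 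So $X_j$ must be \emph{homogeneous} to every triangle: for each $i$ either $V(C_i)\to X_j$ or $X_j\to V(C_i)$. Note that it does not help to combine $X_j$ with $Z=V(C_k)$ instead, because building a new cycle out of the vertices of $Z$ destroys the unique $4$-cycle and leaves the triangles as the only source of short cycles, capping the number of distinct lengths at two; thus $C_k$ must be kept and the extra long cycle has to be produced from $X_j$ together with a single triangle, which is exactly what homogeneity forbids.

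The main obstacle is therefore to rule out this last configuration, in which a strong component $X_j$ of order $3$ or $4$ is homogeneous with respect to every triangle. Here I would abandon the pancyclicity arguments (which are routine) and exploit the outdegree-critical hypothesis instead. Let $O$ and $I$ be the sets of triangles dominated by and dominating $X_j$; recall that the strong components of $T[X]$ are linearly ordered (the strong component digraph of a tournament is transitive), so a vertex of $X_j$ sends arcs inside $X$ only to later components. Combining $\delta^+(T_n)=2k-1$, the bound $|X_j|-1\le 3$ on internal out-neighbours, $|Z|=4$, and the vertex-minimality of $T_n$ (every vertex is dominated by a vertex of outdegree exactly $2k-1$, as in the proof of Lemma~\ref{th:th2.4}), together with Claim~\ref{cl:cl1} applied along the Hamiltonian path $x_1,\dots,x_t$, one controls how $X_j$ can distribute its out-neighbours among $O$, $Z$ and the later components; I expect this bookkeeping to force the existence of a triangle that both reaches and is reached from $X_j$, contradicting homogeneity. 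Making this counting precise, rather than the cycle-extraction steps, is the delicate part of the argument.
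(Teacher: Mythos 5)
There is a genuine gap, and it sits exactly where you placed your hopes. Your first two steps are fine and match the paper's spirit: a strong component of order at least $5$ gives a $5$-cycle and hence a good collection, and two-way arcs between a small strong component $X_j$ and a triangle $C_i$ give a strong tournament on at least $6$ vertices, hence a $5$-cycle replacing only $C_i$, again a good collection. But the entire difficulty of Claim~\ref{cl:cl2} is the third step -- deriving a contradiction from the ``homogeneous'' configuration -- and there you offer only an expectation (``I expect this bookkeeping to force the existence of a triangle that both reaches and is reached from $X_j$''), not an argument. This expectation is moreover misdirected: a crude count does not contradict homogeneity. A vertex of $X_s$ can reach outdegree $2k-1$ by dominating roughly $(2k-8)/3$ triangles outright, and the vertices of those dominated triangles can in turn collect enough out-neighbours inside $V(\mathcal{C})$, so no purely numerical contradiction is available at this point. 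The paper does not contradict homogeneity at all; it accepts the homogeneous structure and builds a good collection \emph{inside} it.

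Concretely, the paper's proof has three ingredients you are missing. First, the spare $3$-cycle in $T[X]$ (which exists by the non-acyclicity assumption) is used to show that the \emph{triangles themselves} are linearly ordered by domination: if $A(V(C_i),V(C_j))$ and $A(V(C_j),V(C_i))$ are both nonempty, one extracts a $5$-cycle from $T[V(C_i)\cup V(C_j)]$ and replaces the lost triangle by the $3$-cycle in $T[X]$. Your proposal never relates triangles to one another, only components to triangles. Second, with $C_{k-1}$ the last triangle in this order, one shows $V(C_{k-1})\rightarrow V(X_s)$, using Claim~\ref{cl:cl1} together with the bound $d^+_{T[V(\mathcal{C})]}(y_i^{k-1})\leqslant 5$, which itself needs the triangle ordering. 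Third, since $d^+_{T[X]}(x_t)\leqslant 1$ while $d^+_{T_n}(x_t)\geqslant 2k-1\geqslant 9$, the vertex $x_t$ has an out-neighbour in some earlier triangle $C_i$ ($i\leqslant k-2$); the contradiction is then reached by an explicit construction, e.g.\ for $\lvert V(X_s)\rvert\in\{3,4\}$ the two disjoint cycles $C_i^*=(x_{t-2},y_0^i,y_1^i,y_0^{k-1},y_1^{k-1},x_{t-2})$ and $C_{k-1}^*=(x_{t-1},y_2^i,y_2^{k-1},x_{t-1})$, whose arcs are guaranteed precisely by Claim~\ref{cl:cl1}, the ordering $C_i\rightarrow C_{k-1}$, and $C_{k-1}\rightarrow X_s$ (the singleton case $\lvert V(X_s)\rvert=1$, which your homogeneity step does not even cover, is handled separately). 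Without this ordering and these constructions, your argument does not close, and it is not a matter of ``making the counting precise'': the counting alone cannot do it.
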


\begin{proof}
Suppose that $T[X]$ is not acyclic.
It follows that $\lvert V(X_r)\rvert=1$, $3$ or $4$, 
here $1\leqslant r\leqslant t-2$.
We shall finish the proof of this claim through the following three statements.

\begin{itemize}
\item For every two distinct cycles $C_i, C_j$ with $1\leqslant i,j\leqslant k-1$, either $V(C_i)\rightarrow V(C_j)$ or  $V(C_j)\rightarrow V(C_i)$.
\end{itemize}

Suppose that $A(V(C_i),V(C_{j_1}))\neq \emptyset$ and $A(V(C_{j_1}),V(C_i))\neq \emptyset$.
Then $T[V(C_i)\cup V(C_{j_1})]$ is a strong subtournament of $T_n$ with $6$ vertices,
which contains a $5$-cycle $C_i^{*}$.
By assumption, 
$T[X]$ contains a $3$-cycle $C^{*}$.
Now
\[
\mathcal{C}^*=\{C_1,\ldots,C_{i-1},C_i^{*},C_{i+1},\ldots,C_{j_1-1},C_{j_1+1},\ldots,C_k,C^*\}
\]
is a good collection, a contradiction.

\begin{itemize}
\item We can reorder the cycles in $Y$ such that $V(C_i)\rightarrow V(C_j)$ for any 
$1\leqslant i<j\leqslant k-1$.
\end{itemize}

Suppose that $V(C_i)\rightarrow V(C_{j_1})$ but $V(C_{j_2})\rightarrow V(C_i)$ for some $1\leqslant i<j_1<j_2\leqslant k-1$.
Let $C_i^*=(y_0^i, y_0^{j_1}, y_0^{j_2},y_0^i)$
and $C_{j_1}^*=(y_1^i, y_2^i, y_1^{j_1}, y_2^{j_1}, y_2^{j_2}, y_1^i)$.
Note that $T[X]$ contains a $3$-cycle $C^{*}$.
Now
\[
\mathcal{C}^*
=\{C_1,\ldots,C_{i-1},C_i^{*},C_{i+1},\ldots,C_{j_1-1},C_{j_1}^*,C_{j_1+1},\ldots,C_{j_2-1},C_{j_2+1},\ldots,C_k,C^*\}
\]
is a good collection, a contradiction.

Without loss of generality, we may assume that for every three distinct cycles $C_i$, $C_{j_1}$ and $C_{j_2}$, it always have $V(C_i)\rightarrow V(C_{j_1})$ and $V(C_i)\rightarrow V(C_{j_2})$ for $1\leqslant i<j_1<j_2\leqslant k-1$.
Clearly,
\begin{align*}
\begin{split}
d^+_{T[X]}(x_t)=d^+_{X_s}(x_t)=\left\{
\begin{array}{lr}
0, &\lvert V(X_s)\rvert=1;\\
1, &\lvert V(X_s)\rvert=3;\\
1, &\lvert V(X_s)\rvert=4.
\end{array}
\right.
\end{split}
\end{align*}

\begin{itemize}
\item $V(C_{k-1})\rightarrow V(X_s)$.
\end{itemize}

We can show that $V(C_{k-1})\rightarrow x_t$.
If not, then we suppose that there exists $0\leqslant i\leqslant 2$
such that $x_t\rightarrow y_i^{k-1}$.
Recall that $V(C_j)\rightarrow V(C_{k-1})$ for every $1\leqslant j\leqslant k-1$.
So we have $d^+_{T[V(\mathcal{C})]}(y_i^{k-1})\leqslant 5$.
Since $d^+_{T_n}(y_i^{k-1})\geqslant 2k-1\geqslant 9$,
we have $y_i^{k-1}\rightarrow x_p$ for some $1\leqslant p\leqslant t-1$,
a contradiction to Claim \ref{cl:cl1}.
If $\lvert V(X_s)\rvert=1$, then the statement is proved.
If $\lvert V(X_s)\rvert=3$ or $4$,
then suppose that $A(x_q,V(C_{k-1}))\neq \emptyset$ for some $x_q\in V(X_s)$.
Thus $T[V(C_{k-1})]\cup V(X_s)$ is a strong subtournament of $T_n$ with at least $6$ vertices,
which contains a $5$-cycle $C_{k-1}^*$.
Now
$
\mathcal{C}^*=\{C_1,\ldots,C_{k-1}^*,C_k\}   
$
is a good collection, a contradiction.

Recall that $\lvert V(X_s)\rvert=1$, $3$ or $4$.
Suppose that $\lvert V(X_s)\rvert=1$.
Since $d^+_{T[X]}(x_t)=0$ and $d^+_{T_n}(x_t)\geqslant 2k-1$, then there exists a cycle $C_i$ such that $d^+_{T[V(C_i)]}(x_t)\geqslant 1$, where $1\leqslant i\leqslant k-2$.
This implies that $T[V(C_i)\cup V(C_{k-1})\cup\{x_t\}]$ is a strong subtournament of $T_n$ with at least $7$ vertices,
which contains a $5$-cycle $C_i^{*}$ by Lemma \ref{thm:Moon}.
Note that $T[X]$ contains a $3$-cycle $C^{*}$.
Now
\[
\mathcal{C}^*
=\{C_1,\ldots,C_{i-1},C_{i}^*,C_{i+1},\ldots,C_{k-2},C_k,C^*\}
\]
is a good collection, a contradiction.

Then $\lvert V(X_s)\rvert=3$ or $4$.
Since $d^+_{T[X]}(x_t)=1$ and $d^+_{T_n}(x_t)\geqslant 2k-1$, then $d^+_{T[V(C_i)]}(x_t)\geqslant 1$, where $1\leqslant i\leqslant k-2$.
It follows that $x_j\rightarrow V(C_i)$ by Claim \ref{cl:cl1}, here $1\leqslant j\leqslant t-1$.
Let $C^{*}_i=(x_{t-2},y_0^i,y_1^i,y_0^{k-1},y_1^{k-1},x_{t-2})$,
$C^{*}_{k-1}=(x_{t-1},y_2^i,y_2^{k-1},x_{t-1})$.
Then we can get a good collection $\mathcal{C}^*$ by replacing $C_i$, $C_{k-1}$ with $C_i^*$, $C_{k-1}^*$, a contradiction.
\end{proof}

It follows that $\lvert V(X_i)\rvert=1$ and $s=t$.
Let $V(X_i)=\{x_i\}$.
Suppose that $\lvert A(x_t,V(C_{j_1}))\rvert\geqslant \lvert A(x_t,V(C_{j_2}))\rvert$ and $\lvert A(x_t,V(C_1))\rvert$ is as large as possible, where $1\leqslant j_1<j_2\leqslant k-1$.
Because of the choice of $C_1$ and Claim \ref{cl:cl1},
we can conclude that $\lvert A(x_t,V(C_1))\rvert\geqslant 2$.
If not, then assume that $x_t\rightarrow y_0^1$ and $\{y_1^1,y_2^1\}\rightarrow x_t$.
Let $C^{'}_1=(y_0^1,y_1^1,x_t,y_0^1)$ and $v_t^{'}=y_2^1$.
Then $\lvert A(x_t^{'},V(C_1^{'}))\rvert=2$,
a contradiction to the choice of $C_1$.

Let $Y_1=V(C_1)\cup\cdots\cup V(C_h)$ satisfying that $A(x_t,V(C_j))\neq\emptyset$ for every $1\leqslant j\leqslant h$.
Denote $Y_2=V(C_{h+1})\cup\cdots\cup V(C_{k-1})$.
One can check that $Y_2\rightarrow v_t$.

\begin{claim}\label{cl:cl3}
If there exists a cycle $C_i$
such that $1 \leqslant\lvert A(x_t,V(C_i))\rvert\leqslant 2$,
then $N^{+}_{T[X\backslash \{x_t\}]}(z_{j_1})=N^{+}_{T[X\backslash \{x_t\}]}(z_{j_2})$,
where $1\leqslant i\leqslant h$,
$j_1\neq j_2$ and $j_1,j_2\in \{1,2,3,4\}$;
moreover, for every two distinct vertices $x_p,x_q$,
if $x_p\in N^+_{T_n}(V(C_k))$,
then $x_q\in N^+_{T_n}(V(C_k))$,
where $1\leqslant p<q\leqslant t-1$.
\end{claim}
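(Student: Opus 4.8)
The plan is to prove both assertions by contradiction, each time upgrading the current collection into a \emph{good} one. The engine of the whole argument is the hypothesis that some $C_i$ with $1\le i\le h$ satisfies $1\le\lvert A(x_t,V(C_i))\rvert\le 2$: this forces $x_t$ to have both an out-neighbour and an in-neighbour in the triangle $C_i$, so $T[V(C_i)\cup\{x_t\}]$ is a strongly connected tournament on four vertices and, by Lemma \ref{thm:Moon}, contains a $4$-cycle $C_i^{*}$. Holding this $4$-cycle in reserve, it then suffices to build a disjoint $5$-cycle on the vertices of $C_k$ together with one or two vertices of $X\setminus\{x_t\}$: the $k-2$ surviving triangles $C_j$ ($j\neq i$), the $4$-cycle $C_i^{*}$ and the $5$-cycle would be $k$ disjoint cycles realising the three lengths $3,4,5$, a contradiction. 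It is worth stressing that a single long cycle built from $C_k$ alone does \emph{not} suffice, since replacing $C_k$ by a $5$-cycle would leave only the two lengths $3$ and $5$; the whole point of $C_i^{*}$ is to retain a length-$4$ cycle, and this is exactly where the hypothesis on $C_i$ is consumed.

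For the first assertion I would prove the equivalent dichotomy that every $x_p\in X\setminus\{x_t\}$ satisfies either $V(C_k)\rightarrow x_p$ or $x_p\rightarrow V(C_k)$. Suppose not; then some $x_p$ has both an in-arc from and an out-arc to $C_k$, so $T[V(C_k)\cup\{x_p\}]$ is strongly connected on five vertices and contains a $5$-cycle $C_k^{*}$ by Lemma \ref{thm:Moon}. Since $x_p\neq x_t$, the cycles $C_k^{*}$, $C_i^{*}$ and the triangles $C_j$ ($j\in\{1,\dots,k-1\}\setminus\{i\}$) are pairwise disjoint and form a good collection, a contradiction. The dichotomy says precisely that $x_p$ is an out-neighbour of one vertex of $C_k$ if and only if it is an out-neighbour of every vertex of $C_k$, which gives $N^{+}_{T[X\setminus\{x_t\}]}(z_{j_1})=N^{+}_{T[X\setminus\{x_t\}]}(z_{j_2})$ for all $j_1,j_2$.

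For the \emph{moreover} part, I would take $1\le p<q\le t-1$ with $x_p\in N^{+}_{T_n}(V(C_k))$ and assume for contradiction that $x_q\notin N^{+}_{T_n}(V(C_k))$. By the dichotomy just established, $V(C_k)\rightarrow x_p$ while $x_q\rightarrow V(C_k)$. Since $T[X]$ is acyclic by Claim \ref{cl:cl2}, its Hamiltonian path induces the transitive order, whence $x_p\rightarrow x_q$. Now $V(C_k)\rightarrow x_p\rightarrow x_q\rightarrow V(C_k)$ together with the strong connectivity of $C_k$ makes $T[V(C_k)\cup\{x_p,x_q\}]$ strongly connected on six vertices, so it contains a $5$-cycle $C_k^{*}$ by Lemma \ref{thm:Moon}. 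As $x_p,x_q\neq x_t$, this $C_k^{*}$ is disjoint from $C_i^{*}$ and from the triangles $C_j$, and again we obtain a good collection, the desired contradiction. (Note that neither $V(C_k)\cup\{x_p\}$ nor $V(C_k)\cup\{x_q\}$ alone is strong, so both $x_p$ and $x_q$, and hence the arc $x_p\rightarrow x_q$ coming from acyclicity, are genuinely needed here.)

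The main obstacle to watch is the bookkeeping that keeps the total number of cycles equal to $k$ while three distinct lengths appear simultaneously: one must check that the newly built $5$-cycle absorbs only vertices of $X\setminus\{x_t\}$ (so that $x_t$ stays free for $C_i^{*}$) and that $C_i$ is disjoint from $C_k$ (so that $C_i^{*}$ and $C_k^{*}$ never compete for a vertex). The restriction $\lvert A(x_t,V(C_i))\rvert\le 2$, rather than $=3$, is indispensable, since if $x_t$ dominated all of $C_i$ the induced subtournament on $V(C_i)\cup\{x_t\}$ would fail to be strong and no $4$-cycle $C_i^{*}$ would be available.
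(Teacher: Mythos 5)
Your proposal is correct and follows essentially the same route as the paper's proof: use $1\leqslant\lvert A(x_t,V(C_i))\rvert\leqslant 2$ to obtain a $4$-cycle in $T[V(C_i)\cup\{x_t\}]$ via Lemma \ref{thm:Moon}, then derive a contradiction by building a disjoint $5$-cycle on $V(C_k)$ together with one vertex (first assertion) or two vertices (moreover part) of $X\setminus\{x_t\}$, keeping the remaining triangles to form a good collection. In fact you are slightly more careful than the paper, which leaves implicit the arc $x_p\rightarrow x_q$ (from the acyclicity of $T[X]$ in Claim \ref{cl:cl2}) needed for the strong connectivity of $T[V(C_k)\cup\{x_p,x_q\}]$ in the second part.
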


\begin{proof}
If not, then there exists $1\leqslant p\leqslant t-1$
such that $z_{j_1}\rightarrow x_p$ and $x_p\rightarrow z_{j_2}$.
It follows that $T[V(C_k)\cup \{x_p\}]$ is a strong subtournament of $T_n$ with $5$ vertices,
which contains a $5$-cycle $C_k^{*}$ by Lemma \ref{thm:Moon}.
Note that $T[V(C_i)\cup\{x_t\}]$ is a strong subtournament of $T_n$ with $4$ vertices, which contains a $4$-cycle $C_h^{*}$.
Now $\mathcal{C}^*=\{C_1,\ldots,C_{i-1},C_{i}^*,C_{i+1},\ldots,C_{k-1},C_k^{*}\}
$ is a good collection, a contradiction.

If $p<q$,
then suppose that $x_p\in N^+_{T_n}(V(C_k))$ and $x_q\notin N^+_{T_n}(V(C_k))$.
It follows that $T[V(C_k)\cup \{x_p,x_q\}]$ is a strong subtournament of $T_n$ with at least $6$ vertices,
which contains a $5$-cycle $C_k^*$.
Note that $T[V(C_h)\cup\{x_t\}]$ is a strong subtournament of $T_n$ with at least $4$ vertices,
which contains a $4$-cycle $C_h^*$.
Now $\mathcal{C}^*=\{C_1,\ldots,C_{h-1},C_{h}^*,C_{h+1},\ldots,C_{k-1},C_k^{*}\}$ is a good collection, a contradiction.
\end{proof}

\begin{claim}\label{cl:cl6}
For every $1\leqslant i\leqslant h$ and every $h+1\leqslant j\leqslant k-1$,
we have $\lvert A(V(C_i),V(C_{j}))\rvert\leqslant 3$ for the following two cases: $(1)$ $\lvert A(x_t,V(C_i))\rvert=3$ and $\lvert A(V(C_j),X)\rvert\geqslant 4$,
$(2)$ $\lvert A(x_t,V(C_i))\rvert<3$ and $\lvert A(V(C_j),X)\rvert\geqslant 6$.
\end{claim}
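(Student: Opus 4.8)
The plan is to argue by contradiction: suppose $|A(V(C_i),V(C_j))|\geqslant 4$ and produce a good collection. First I would record the ambient arc structure. Since $j\geqslant h+1$ we have $V(C_j)\rightarrow x_t$; since $i\leqslant h$ we have at least one arc $x_t\rightarrow V(C_i)$ (all three in case $(1)$); and Claim \ref{cl:cl1} forces $x_p\rightarrow V(C_i)$ for every $x_p\in X$ with $p<t$. Next I would use the hypothesis on $|A(V(C_j),X)|$ to pin down an auxiliary vertex $x_p\in X\setminus\{x_t\}$: in case $(1)$, $|A(V(C_j),X)|\geqslant 4$ gives at least one arc from $C_j$ to some $x_p$ with $p<t$; in case $(2)$, $|A(V(C_j),X)|\geqslant 6$ together with the monotonicity in Claim \ref{cl:cl1} forces a vertex $x_p$ with $V(C_j)\rightarrow x_p$ completely. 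The target is then to split the eight vertices $W=V(C_i)\cup V(C_j)\cup\{x_t,x_p\}$ into a $3$-cycle and a vertex-disjoint $5$-cycle: replacing $C_i,C_j$ by these two cycles and keeping the remaining $k-3$ triangles together with the $4$-cycle $C_k$ yields $k$ disjoint cycles of lengths $3$, $4$ and $5$, a good collection $\mathcal{C}^*$, a contradiction.

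For the split I would first extract two vertex-disjoint arcs from $C_i$ to $C_j$. Viewing the $\geqslant 4$ arcs of $A(V(C_i),V(C_j))$ as a bipartite graph on the two triples, a maximum matching of size $1$ would force, by K\"onig's theorem, all arcs through a single vertex and hence at most $3$ of them; so there exist disjoint arcs $a\rightarrow b$ and $a'\rightarrow b'$ with $a,a'\in V(C_i)$ and $b,b'\in V(C_j)$ distinct. I would then build the $3$-cycle from one of these arcs together with $x_t$, using $V(C_j)\rightarrow x_t$ and $x_t\rightarrow V(C_i)$, reserving the other arc to link the two leftover $C_i$-vertices to the two leftover $C_j$-vertices. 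The five remaining vertices consist of two vertices of $C_i$, two of $C_j$ and $x_p$; since $x_p\rightarrow V(C_i)$ and, by the choice of $x_p$, some leftover $C_j$-vertex dominates $x_p$, while the reserved arc connects the $C_i$-side to the $C_j$-side, this subtournament is strongly connected and hence contains a $5$-cycle by Lemma \ref{thm:Moon}.

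The two cases are dual, and this is where the numerical thresholds are spent. In case $(1)$ the relation $x_t\rightarrow V(C_i)$ is complete, so the $3$-cycle through $x_t$ may use either of the two disjoint arcs freely; the delicate point is to guarantee that the scarce out-neighbor $w_0$ of $C_j$ on $x_p$ survives in the leftover $C_j$-pair, which I would arrange by choosing the $3$-cycle's $C_j$-vertex different from $w_0$. In case $(2)$ the domination $V(C_j)\rightarrow x_p$ is complete, so the leftover $C_j$-pair automatically reaches $x_p$; the delicate point is instead that $x_t\rightarrow V(C_i)$ is only partial, so the $3$-cycle must be built from an out-neighbor of $x_t$ in $C_i$ that also has an out-arc to $C_j$ lying in a disjoint pair.

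The step I expect to be the main obstacle is verifying strong connectivity of the residual five-vertex set in the degenerate configurations: for instance when some vertex of $C_j$ dominates all of $C_i$, when a leftover $C_i$-vertex is dominated by every other residual vertex and thus becomes a sink, or when the scarce $C_j$-to-$x_p$ arcs collide with the forced choice of the disjoint-arc pair. In each such configuration one must re-select the decomposition by swapping the roles of $x_t$ and $x_p$, choosing the other disjoint arc, or picking a different $x_p$; the precise bounds $|A(V(C_j),X)|\geqslant 4$ in case $(1)$ and $\geqslant 6$ in case $(2)$ are exactly what guarantee that at least one admissible choice always remains.
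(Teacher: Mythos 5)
Your high-level plan is the same as the paper's: assume $\lvert A(V(C_i),V(C_j))\rvert\geqslant 4$, use Claim \ref{cl:cl1} and the thresholds $4$ and $6$ to extract an auxiliary vertex $x_p$ with $p<t$ (your derivation of $x_p$ is correct in both cases, including that case $(2)$ forces $V(C_j)\rightarrow x_p$ for some $p<t$), and then repartition the eight vertices $V(C_i)\cup V(C_j)\cup\{x_t,x_p\}$ into a $3$-cycle and a disjoint $5$-cycle, which together with the $k-3$ untouched triangles and the $4$-cycle $C_k$ forms a good collection. The matching observation (four arcs between two triples contain two vertex-disjoint arcs) is also correct and is a cleaner way to organize the sources of arcs than the paper uses.

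The gap is that the entire content of the claim lies in the step you defer to the last paragraph. Your template --- $3$-cycle always of the form $(a,b,x_t)$ on a matching arc, $5$-cycle always on the complementary five vertices containing $x_p$, obtained from strong connectivity plus Lemma \ref{thm:Moon} --- together with your listed repairs (other matching arc, swap $x_t$ and $x_p$, other $x_p$) is asserted, not shown, to always succeed; that assertion is essentially the claim itself. Moreover, the template can be impossible to instantiate: in case $(2)$, $x_t$ may have a unique out-neighbor $a$ in $V(C_i)$ while all four arcs of $A(V(C_i),V(C_j))$ leave the other two vertices of $C_i$, so no $3$-cycle of the form (vertex of $C_i$, vertex of $C_j$, $x_t$) exists at all; the repair is a $3$-cycle inside $V(C_i)\cup\{x_t\}$ with a $5$-cycle on the third $C_i$-vertex, $V(C_j)$ and $x_p$, which is outside your framework. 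Similarly, in case $(1)$ with $C_i=(a,a',a'',a)$, $C_j=(b,b',b'',b)$, arcs $a\rightarrow\{b,b''\}$, $a''\rightarrow\{b,b''\}$ and $b'$ the unique in-neighbor of $x_p$ in $C_j$, the matching $\{a\rightarrow b,\ a''\rightarrow b''\}$ fails for both choices of which arc hosts the $3$-cycle (one leftover five-set has the source $b'$, the other the sink $a'$), and one must instead take the crossing matching $\{a\rightarrow b'',\ a''\rightarrow b\}$. Whether some admissible decomposition survives in every configuration depends delicately on the two triangle orientations and the arc pattern; resolving this is exactly what the paper's long explicit case analysis does, and the paper's constructions are not confined to your template (it uses, for instance, a $3$-cycle containing no vertex of $X$ paired with a $5$-cycle through both $x_t$ and $x_p$). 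Without carrying out that exhaustive verification, or replacing it by a genuinely uniform argument, the proof is incomplete.
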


\begin{proof}
It suffices to show the conclusion
for $\lvert A(x_t,V(C_i))\rvert=3$ and $\lvert A(V(C_j),X)\rvert\geqslant 4$.
If $\lvert A(x_t,V(C_i))\rvert<3$ and $\lvert A(V(C_j),X)\rvert\geqslant 6$,
then we can reverse direction of each edge and get the same conclusion.

Suppose that $\lvert A(V(C_i),V(C_{j}))\rvert\geqslant 4$,
without loss of generality,
it can be assumed that there exists $0\leqslant\ell_1\leqslant 2$
such that $d^{+}_{T[X]}(y_{\ell_1}^j)>1$.
Then there exists $p\neq t$ such that $y_{\ell_1}^j\rightarrow x_p$.
We can conclude that $V(C_j)\rightarrow \{x_{p+1},\ldots,x_t\}$.
We will get a contradiction by replacing 
$C_i$, $C_{k-1}$ with $C_i^*$, $C_{k-1}^*$
in the following.

We state that there exists $0\leqslant q\leqslant 2$ such that $y_q^i\rightarrow y_{\ell_1}^j$.
If not, then $y_{\ell_1}^j\rightarrow V(C_i)$.
Since $\lvert A(V(C_i),V(C_{j}))\rvert\geqslant 4$,
then there exists $0\leqslant\ell_2\leqslant 2$
such that $y_{\ell_2}^i\rightarrow \{y_{\ell_1+1}^j,y_{\ell_1+2}^j\}$.
If $y_{\ell_2+1}^i\rightarrow y_{\ell_1+1}^j$,
then let $C_i^*=(y_{\ell_2+1}^i,y_{\ell_1+1}^j,x_t,y_{\ell_2+1}^i)$
and $C_{j}^*=(y_{\ell_2+2}^i,y_{\ell_2}^i,y_{\ell_1+2}^j,y_{\ell_1}^j,x_p,y_{\ell_2+2}^i)$.
So $y_{\ell_1+1}^j\rightarrow y_{\ell_2+1}^i$.
If $y_{\ell_2+2}^i\rightarrow y_{\ell_1+1}^j$,
then we let
$C_i^*=(y_{\ell_2+1}^i,y_{\ell_2+2}^i,y_{\ell_1+1}^j,y_{\ell_2+1}^i)$ and
$C_{j}^*=(y_{\ell_2}^i,y_{\ell_1+2}^j,y_{\ell_1}^j,x_p,x_t,y_{\ell_2}^i)$.
It follows that $y_{\ell_1+1}^j\rightarrow \{y_{\ell_2+1}^i,y_{\ell_2+2}^i\}$.
We can conclude that $V(C_i)\rightarrow y_{\ell_1+2}^j$ by $\lvert A(V(C_i),V(C_{j}))\rvert\geqslant 4$.
Now let $C_i^*=(y_{\ell_2+1}^i,y_{\ell_2+2}^i,y_{\ell_1+2}^j,y_{\ell_1}^j,x_p,y_{\ell_2+1}^i)$ and $C_{j}^*=(y_{\ell_2}^i,y_{\ell_1+1}^j,x_t,y_{\ell_2}^i)$.

If $y_{q+1}^i\rightarrow y_{\ell_1+1}^j$, 
then $y_{q+1}^i\rightarrow y_{\ell_1+2}^j$;
otherwise, 
let $C_i^*=(y_{q+1}^i,y_{\ell_1+1}^j,y_{\ell_1+2}^j,y_{q+1}^i)$ and
$C_{j}^*=(y_{q+2}^i,y_{q}^i,y_{\ell_1}^j,x_p,x_t,y_{q+2}^i)$.
We can also conclude that $y_{\ell_1+1}^j\rightarrow y_{q+2}^i$, $y_{\ell_1}^j\rightarrow y_{q+2}^i$, $y_{\ell_1+2}^j\rightarrow y_{q}^i$, $y_{\ell_1+1}^j\rightarrow y_{q}^i$, $y_{\ell_1+1}^j\rightarrow x_p$, $x_p\rightarrow y_{\ell_1+2}^j$ 
and $y_{q+2}^i\rightarrow y_{\ell_1+2}^j$;
otherwise,
let
\begin{align*}
\begin{split}
\left\{
\begin{array}{ll}
C_i^*=(y_{q+2}^i,y_{\ell_1+1}^j,x_t,y_{q+2}^i), C_{j}^*=(y_{q}^i,y_{q+1}^i,y_{\ell_1+2}^j,y_{\ell_1}^j,x_p,y_{q}^i), &y_{q+2}^i\rightarrow y_{\ell_1+1}^j;\\
C_i^*=(y_{q+2}^i,y_{\ell_1}^j,x_p,y_{q+2}^i), C_{j}^*=(y_{q}^i,y_{q+1}^i,y_{\ell_1+1}^j,y_{\ell_1+2}^j,x_t,y_{q}^i), &y_{q+2}^i\rightarrow y_{\ell_1}^j;\\
C_i^*=(y_{q+1}^i,y_{\ell_1+1}^j,x_t,y_{q+1}^i), C_{j}^*=(y_{q+2}^i,y_{q}^i,y_{\ell_1+2}^j,y_{\ell_1}^j,x_p,y_{q+2}^i), &y_{q}^i\rightarrow y_{\ell_1+2}^j;\\
C_i^*=(y_{q+2}^i,y_{q}^i,y_{\ell_1+1}^j,y_{q+2}^i), C_{j-1}^*=(y_{q+1}^i,y_{\ell_1+2}^j,y_{\ell_1}^j,x_p,x_t,y_{q+1}^i), &y_{q}^i\rightarrow y_{\ell_1+1}^j;\\
C_i^*=(y_{q+1}^i,y_{\ell_1+2}^j,x_t,y_{q+1}^i), C_{j}^*=(y_{q+2}^i,y_{q}^i,y_{\ell_1}^j,x_p,y_{\ell_1+1}^j,y_{q+2}^i), &x_p\rightarrow y_{\ell_1+1}^j;\\
C_i^*=(y_{q+2}^i,y_{q}^i,y_{\ell_1}^j,y_{q+2}^i), C_{j}^*=(y_{q+1}^i,y_{\ell_1+1}^j,y_{\ell_1+2}^j,x_p,x_t,y_{q+1}^i), &y_{\ell_1+2}^j\rightarrow x_p;\\
C_i^*=(y_{q+1}^i,y_{\ell_1+1}^j,x_t,y_{q+1}^i),
C_{j}^*=(y_{q+2}^i,y_{q}^i,y_{\ell_1}^j,x_p,y_{\ell_1+2}^j,y_{q+2}^i),
&y_{\ell_1+2}^j\rightarrow y_{q+2}^i.
\end{array}
\right.
\end{split}
\end{align*}

If $y_{q+1}^i\rightarrow y_{\ell_1}^j$, then let $C_i^*=(y_{q}^i,y_{q+1}^i,y_{\ell_1}^j,y_{\ell_1+1}^j,x_p,y_{q}^i)$,
$C_{j}^*=(y_{q+2}^i,y_{\ell_1+2}^j,x_t,y_{q+2}^i)$.
If $y_{\ell_1}^j\rightarrow y_{q+1}^i$, then let $C_i^*=(y_{q}^i,y_{\ell_1}^j,y_{q+1}^i,y_{\ell_1+1}^j,x_p,y_{q}^i)$,
$C_{j}^*=(y_{q+2}^i,y_{\ell_1+2}^j,x_t,y_{q+2}^i)$.
It follows that $y_{\ell_1+1}^j\rightarrow y_{q+1}^i$.

If $y_{q+1}^i\rightarrow y_{\ell_1+2}^j$,
then we can get $y_{q+2}^i\rightarrow y_{\ell_1}^j$, $x_{p}\rightarrow y_{\ell_1+1}^j$, $y_{\ell_1+1}^j\rightarrow y_{q+2}^i$, $x_{p}\rightarrow y_{\ell_1+2}^j$, $y_{\ell_1+1}^j\rightarrow y_{q}^i$, $y_{\ell_1+2}^j\rightarrow y_{q}^i$.
Let $C_i^*=(y_{q}^i,y_{q+1}^i,y_{\ell_1+2}^j,y_{q}^i)$,
$C_{j}^*=(y_{q+2}^i,y_{\ell_1}^j,x_p,y_{\ell_1+1}^j,x_t,y_{q+2}^i)$.
It follows that $y_{\ell_1+2}^j\rightarrow y_{q+1}^i$.

If $y_{q+1}^i\rightarrow y_{\ell_1}^j$,
then we can get $y_{\ell_1+1}^j\rightarrow y_{q+2}^i$, $y_{\ell_1+1}^j\rightarrow y_{q}^i$, $y_{\ell_1+2}^j\rightarrow y_{q+2}^i$, $y_{\ell_1+2}^j\rightarrow y_{q}^i$.
This implies that $\lvert A(V(C_i),V(C_{j}))\rvert\leqslant 3$, a contradiction.
It follows that $y_{\ell_1}^j\rightarrow y_{q+1}^i$.

That is $V(C_j)\rightarrow y_{q+1}^i$.
Now we know that $y_{\ell_1+1}^j\rightarrow y_{q+2}^i$.
If $y_{q+2}^i\rightarrow y_{\ell_1+2}^j$,
then we let $C_i^*=(y_{q+1}^i,y_{q+2}^i,y_{\ell_1+2}^j,y_{q+1}^i)$,
$C_{j}^*=(y_{q}^i,y_{\ell_1}^j,x_p,y_{\ell_1+1}^j,x_t,y_{q+2}^i)$ or $(y_{q}^i,y_{\ell_1}^j,y_{\ell_1+1}^j,x_p,x_t,y_{q+2}^i)$.
Since $\lvert A(V(C_i),V(C_{j}))\rvert\geqslant 4$, then $y_{\ell_1+2}^j\rightarrow y_{q+2}^i$ and $y_{q+2}^i\rightarrow y_{\ell_1}^j$.
Let $C_i^*=(y_{q+1}^i,y_{q+2}^i,y_{\ell_1}^j,y_{\ell_1+1}^j,x_p,y_{q+1}^i)$ or $C_i^*=(y_{q+1}^i,y_{q+2}^i,y_{\ell_1}^j,x_p,y_{\ell_1+1}^j,y_{q+1}^i)$,
$C_{j}^*=(y_{q}^i,y_{\ell_1+2}^j,x_t,y_{q}^i)$, a good collection appears.
\end{proof}

\begin{claim}\label{cl:cl4}
$Y_2\neq \emptyset$.
\end{claim}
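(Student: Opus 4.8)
The plan is to argue by contradiction: assume $Y_2=\emptyset$. Then $x_t$ has an out-neighbour in each of the triangles $C_1,\dots,C_{k-1}$, and Claim~\ref{cl:cl1} upgrades this to $x_p\rightarrow V(C_i)$ for all $p<t$ and all $1\leqslant i\leqslant k-1$; in other words $X\setminus\{x_t\}\rightarrow Y$, while from the choice of $C_1$ we retain $|A(x_t,V(C_1))|\geqslant 2$. The goal is to exhibit a good collection, contradicting the standing assumption. The guiding principle is to keep $C_k$ as a cycle of length $4$ and at least one triangle as a cycle of length $3$, and to manufacture a cycle of a third length while keeping the total number of cycles equal to $k$; the engine throughout is Lemma~\ref{thm:Moon}, which turns any strong subtournament on $m$ vertices into cycles of every length $3,\dots,m$.

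First I would extract the structural consequences of connectivity. By Lemma~\ref{th:th2.4} the tournament $T_n$ is strongly connected, and since no triangle dominates $x_1$ (because $X\setminus\{x_t\}\rightarrow Y$) and no $x$-vertex precedes $x_1$, the only possible in-neighbours of $x_1$ lie in $V(C_k)$; hence some vertex of $C_k$ dominates $x_1$, and in particular some arc must enter $C_k$ from outside.

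I would then split on whether some triangle $C_i$ satisfies $1\leqslant|A(x_t,V(C_i))|\leqslant 2$. In that case $T[V(C_i)\cup\{x_t\}]$ is strong and carries a $4$-cycle through $x_t$, and Claim~\ref{cl:cl3} forces all four vertices of $C_k$ to share their out-neighbourhood inside $X\setminus\{x_t\}$ and forces the set of $x_p$ dominated by $C_k$ to be upward-closed in the order $x_1,\dots,x_{t-1}$; combined with the previous paragraph this yields $V(C_k)\rightarrow\{x_1,\dots,x_{t-1}\}$. Using $C_i$, $x_t$ and a single vertex of a second triangle $C_j$ I would build a $5$-cycle, reroute the two remaining vertices of $C_j$ together with $V(C_k)$ into a longer cycle (both subtournaments being strong by the adjacencies just obtained, after a short sub-analysis), and leave the other triangles untouched; the resulting $k$ disjoint cycles then display three different lengths. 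The complementary case is $|A(x_t,V(C_i))|=3$ for every $i$, i.e.\ $X\rightarrow Y$ entirely; then every vertex of $X$ has out-degree at least $3(k-1)>2k-1$, so all minimum-out-degree vertices lie in $Y\cup V(C_k)$, and outdegree-criticality forces minimum-out-degree vertices of $C_k$ to dominate all of $X$. I would feed this into Lemma~\ref{thm:Moon} applied to the large strong subtournament of $T[X\cup V(C_k)]$ to extract disjoint cycles of two new lengths, again producing a good collection.

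The main obstacle is not finding a single long cycle but controlling the cycle count. Because the early vertices of $X$ dominate every vertex of $Y$, none of them can lie on a cycle meeting $Y$, so effectively only $x_t$ and the four vertices of $C_k$ are available as connectors for new cycles; since $x_t$ can sit on at most one new cycle, realizing a third length while keeping exactly $k$ disjoint cycles forces a careful case analysis of the arcs among $C_i$, $C_j$, $V(C_k)$ and the $x_p$. This bookkeeping, rather than any single construction, is where the real work lies.
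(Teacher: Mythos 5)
Your setup coincides with the paper's: assuming $Y_2=\emptyset$, Claim~\ref{cl:cl1} gives $\{x_1,\dots,x_{t-1}\}\rightarrow Y_1$, you split on whether some triangle satisfies $1\leqslant |A(x_t,V(C_i))|\leqslant 2$ or $x_t\rightarrow Y_1$, and in the first case you correctly combine Claim~\ref{cl:cl3} with criticality to get $V(C_k)\rightarrow\{x_1,\dots,x_{t-1}\}$. But both of your terminal constructions have real gaps. In the first case you dismantle three cycles ($C_i$, $C_j$, $C_k$) and build only two new ones: the $5$-cycle on $V(C_i)\cup\{x_t\}$ plus one vertex of $C_j$, and one longer cycle on the remaining two vertices of $C_j$ together with $V(C_k)$. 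With the $k-3$ untouched triangles this totals $k-1$ disjoint cycles, not $k$, so it is not a good collection and yields no contradiction. The paper's construction is count-preserving: a $5$-cycle on one triangle $C_p$ plus one vertex $z_q$ of $C_k$ plus $x_1$, a $4$-cycle on another triangle $C_{k-1}$ plus $x_t$, and --- the cycle you lose by absorbing all of $V(C_k)$ into one long cycle --- a $3$-cycle on the three remaining vertices of $C_k$ (this is where the chords $z_1\rightarrow z_3$, $z_2\rightarrow z_4$ matter). Moreover, the arc from some $C_p$ into $z_q$ that seeds the $5$-cycle is not free; the paper obtains it from the counting step you skipped entirely, namely that $\{x_1,\dots,x_{t-1}\}\rightarrow Y_1$ forces $\sum_{j=1}^4 d^+_{T[Y_1]}(z_j)\leqslant\frac{(3k-2)(7-k)}{2}$, which simultaneously pins down $5\leqslant k\leqslant 7$ and rules out $\{z_2,z_3\}\rightarrow V(C_1)\cup\cdots\cup V(C_{k-2})$.

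In your second case, the plan to apply Lemma~\ref{thm:Moon} inside $T[X\cup V(C_k)]$ and ``extract disjoint cycles of two new lengths'' is not justified and can be outright impossible. Moon's theorem gives pancyclicity of a single strong tournament, not two vertex-disjoint cycles of distinct lengths; and since $T[X]$ is transitive, every cycle of $T[X\cup V(C_k)]$ must meet $V(C_k)$. Nothing you have established excludes $V(C_k)\rightarrow X$ --- this is even the cleanest way to realize your own observation that minimum-outdegree vertices of $C_k$ dominate $X$ --- and in that configuration every cycle of $T[X\cup V(C_k)]$ lies inside the four vertices of $C_k$, so that subtournament contains no two disjoint cycles at all. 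The paper handles this case by counting once more: with $X\rightarrow Y_1$, all out-neighbours of $Y_1$ lie in $Y_1\cup Z$, giving $\sum_{i=1}^4 d^+_{T[Y_1]}(z_i)\leqslant 6$; hence almost all arcs between $Y_1$ and $Z$ enter $Z$ and each $z_i$ has out-neighbours in $X$, and the good collection is then assembled from cycles weaving through $Y_1$, $Z$ and $X$ together, not from $X\cup V(C_k)$ alone.
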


\begin{proof}
If $Y_2=\emptyset$,
then $A(x_t,V(C_i))\neq \emptyset$ for every $1\leqslant i\leqslant k-1$.
Now $Y_1=V(C_1)\cup\ldots\cup V(C_{k-1})$.
By Claim \ref{cl:cl1},
we have $\{x_1,\ldots,x_{t-1}\}\rightarrow Y_1$.
Then
\[
\lvert A(Y_1,V(C_k))\rvert \geqslant(2k-5)\times (2k-1)+(k+2)\times(2k-2)-\binom{3k-3}{2}=\frac{(3k-5)(k+2)}{2}.
\]
Since there are $12(k-1)$ arcs between $Y_1$ and $V(C_k)$,
we have
\[
\sum_{j=1}^{4}d_{T[Y_1]}^+(z_j)\leqslant 12(k-1)-\frac{(3k-5)(k+2)}{2}=\frac{(3k-2)(7-k)}{2}.
\]
By $\sum_{j=1}^{4}d_{T[Y_1]}^+(z_i)\geqslant 0$ and $k\geqslant 5$,
we have $5\leqslant k\leqslant 7$.

\begin{case4}\label{ca:case4.1}
There exists $1\leqslant j\leqslant 4$ such that $1 \leqslant\lvert A(x_t, V(C_j))\rvert\leqslant 2$.
\end{case4}
Without loss of generality,
suppose that $1 \leqslant\lvert A(x_t, V(C_{k-1}))\rvert\leqslant 2$.
By Claim \ref{cl:cl1}, 
we can conclude that $\{x_1,x_2,\ldots,x_{t-1}\}\rightarrow V(C_i)$
for every $1\leqslant i\leqslant k-1$.
By Claim \ref{cl:cl3}, 
we can conclude that 
$N^{+}_{T[X\backslash \{x_t\}]}(z_{j_1})=N^{+}_{T[X\backslash \{x_t\}]}(z_{j_2})$
for every $j_1,j_2\in \{1,2,3,4\}$, $j_1\neq j_2$.
It follows that $V(C_k)\rightarrow \{x_1,x_2,\ldots,x_{t-1}\}$ by the definition of $r$-outdegree-critical tournaments.
There exists $2\leqslant q\leqslant 3$ and $1\leqslant p\leqslant k-2$, such that $V(C_p)\cap N_{T_n}^-(z_q)\neq \emptyset$.
Then $T[V(C_p)\cup\{z_q\}\cup\{x_1\}]$ is a strong subtournament of $T_n$ with $5$ vertices,
which contains a $5$-cycle $C_p^*$.
Note that $T[V(C_{k-1})\cup\{x_t\}]$ is a strong subtournament of $T_n$ with $4$ vertices,
which contains a $4$-cycle $C_{k-1}^*$.
It is not difficult to see that $C_k^*=(z_1,z_{3-q},z_4,z_1)$ is a $3$-cycle,
then \[\mathcal{C}^*=\{C_1,C_2,\ldots,C_{p-1},C_p^*,C_{p+1},\ldots,C_{k-1}^*,C_k^*\}\] is a good collection, a contradiction.

\begin{case4}\label{sca:subcase4.1.2}
$\lvert A(x_t,V(C_j))\rvert=3$ for every $1\leqslant j\leqslant k-1$.
\end{case4}

This implies that $x_t\rightarrow Y_1$ and
\[
\sum_{i=1}^{4}d^{+}_{T[Y_1]}(z_i)
\leqslant4\times (3k-3)-\left((3k-3)\times (2k-1)-\binom{3k-3}{2}\right)\leqslant 6.
\]
It is not difficult to find a good collection,
a contradiction.
\end{proof}

\begin{claim}\label{cl:cl5}
The tournament $T_n$ is not $(2k-1)$-regular.
\end{claim}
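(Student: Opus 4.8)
The plan is to assume that $T_n$ is $(2k-1)$-regular and then exhibit a good collection, contradicting the standing hypothesis that $T_n$ has none. Regularity together with $d^+_{T_n}(v)+d^-_{T_n}(v)=n-1$ forces every out-degree to equal $(n-1)/2=2k-1$, whence $n=4k-1$ and $t=|X|=n-(3k+1)=k-2$. By Claim~\ref{cl:cl2} the subtournament $T[X]$ is acyclic, hence transitive along the Hamiltonian path $x_1,\ldots,x_t$; thus $x_1\to\{x_2,\ldots,x_t\}$ and $x_t$ has no out-neighbour inside $X$. Since $|A(x_t,V(C_j))|\geqslant 1$ for every $C_j$ with $V(C_j)\subseteq Y_1$, Claim~\ref{cl:cl1} upgrades this to $\{x_1,\ldots,x_{t-1}\}\to Y_1$, while $Y_2\to x_t$ by definition; recall also $Y_2\neq\emptyset$ by Claim~\ref{cl:cl4}.

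The key step is a degree count at the two ends of the path. Every out-neighbour of $x_t$ lies in $V(\mathcal{C})$ and none lies in $Y_2$, so
\[
2k-1=d^+_{T_n}(x_t)=|A(x_t,Y_1)|+|A(x_t,Z)|\leqslant 3h+4,
\]
giving $h\geqslant (2k-5)/3$. Conversely $x_1$ dominates $x_2,\ldots,x_t$ and all of $Y_1$, so
\[
2k-1=d^+_{T_n}(x_1)\geqslant (t-1)+3h=(k-3)+3h,
\]
giving $h\leqslant (k+2)/3$. Hence $2k-5\leqslant k+2$, i.e.\ $k\leqslant 7$, which disposes of all $k\geqslant 8$; and for $k=6$ the interval $7/3\leqslant h\leqslant 8/3$ contains no integer. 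Only the two tight pairs $(k,h)=(5,2)$ and $(k,h)=(7,3)$ remain.

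In each surviving case (near-)equality pins down the arc pattern. For $(k,h)=(7,3)$ both displays are tight, so $x_t\to Y_1\cup Z$ and $Y_2\cup Z\to x_1$; feeding $V(C_i)\to x_1$ (for $C_i\subseteq Y_2$) into Claim~\ref{cl:cl1} gives $V(C_i)\to\{x_2,\ldots,x_t\}$ as well, whence $X\to Y_1$ and $Y_2\to X$. For $(k,h)=(5,2)$ the $x_1$-display shows that at most one vertex of $Y_2\cup Z$ is an out-neighbour of $x_1$, so all but one of them dominates $x_1$, and the $x_t$-display forces $|A(x_t,Y_1)|\geqslant 5$. A short degree count on $\sum_{y\in Y_1}d^+_{T_n}(y)$ then shows $|A(Y_1,Y_2)|$ is large (at least $14$ when $k=5$ and at least $45$ when $k=7$). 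Using this I pick $y_1\in Y_1$, $y_2\in Y_2$ with $y_1\to y_2$, with $x_t\to y_1$ and $y_2\to x_1$, and form the $5$-cycle $(x_1,x_2,x_3,y_1,y_2,x_1)$, justified by $x_1\to x_2\to x_3$ (transitivity), $x_3\to y_1$ (here $x_3=x_t$ when $k=5$, and $X\to Y_1$ when $k=7$), $y_1\to y_2$, and $y_2\to x_1$.

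It remains to repack the untouched vertices. I keep $C_k$ as the length-$4$ cycle and every triangle of $Y_1\cup Y_2$ avoiding $\{y_1,y_2\}$ as length-$3$ cycles; together with the $5$-cycle these already realize the lengths $3,4,5$. The leftover vertices are $\{x_4,\ldots,x_t\}$ (empty when $k=5$) together with the two pairs released from the triangles broken by $y_1$ and by $y_2$, and I need exactly one further cycle from them. Using $Y_2\to X\to Y_1$ together with a single arc from $Y_1$ to $Y_2$ among these vertices, I reassemble one more triangle, completing $k$ disjoint cycles of lengths $3,4,5$ — a good collection, the desired contradiction. The hard part will be precisely this final repacking in the two tight cases: the arcs between $Y_1$ and $Y_2$ (and their arcs to $Z$) are heavily constrained but not fully determined, so when $k=7$ the extra vertices $x_4,x_5$ let me close a triangle $(p,q,x_4)$ with $q\to x_4\to p$ and $p\to q$, whereas when $k=5$ there is no spare $X$-vertex and the leftover triangle must live inside the four released vertices. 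I would therefore argue that the \emph{choice} of which triangle of $Y_1$ and which of $Y_2$ to break can always be made so that the released vertices span a directed triangle (or, failing that, route the extra cycle through $Z$), and that the few orientations obstructing this are excluded by the largeness of $|A(Y_1,Y_2)|$ forced above; verifying that such a choice always exists is where the genuine work of the claim lies.
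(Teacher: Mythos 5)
Your opening reduction is correct and coincides with the paper's: regularity forces $n=4k-1$ and $t=k-2$; the two degree counts at $x_1$ and $x_t$ give $\frac{2k-5}{3}\leqslant h\leqslant\frac{k+2}{3}$, which eliminates $k\geqslant 8$, kills $k=6$ (no integer $h$), and pins down $(k,h)\in\{(5,2),(7,3)\}$ exactly as the paper does. The tightness consequences you extract are also sound: for $k=7$, $x_t\rightarrow Y_1\cup Z$ and $Y_2\cup Z\rightarrow x_1$, hence $X\rightarrow Y_1$ and $Y_2\rightarrow X$ via Claim \ref{cl:cl1}; for $k=5$, the near-tight analogues, the bound $\lvert A(x_t,Y_1)\rvert\geqslant 5$, and the lower bounds on $\lvert A(Y_1,Y_2)\rvert$; and your $5$-cycle $(x_1,x_2,x_3,y_1,y_2,x_1)$ does exist in both cases.

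The genuine gap is the final repacking, which you yourself flag as ``where the genuine work of the claim lies'' --- and it really is, so the proof stops at its crux. After breaking a triangle $C_a\subseteq Y_1$ at $y_1$ and a triangle $C_b\subseteq Y_2$ at $y_2$, the released vertices $\{a_1,a_2\}=V(C_a)\setminus\{y_1\}$ and $\{b_1,b_2\}=V(C_b)\setminus\{y_2\}$ (plus $x_4,x_5$ when $k=7$) must contain a cycle. Since $Y_2\rightarrow X\rightarrow Y_1$, every candidate cycle needs an arc from $\{a_1,a_2\}$ to $\{b_1,b_2\}$; if all four cross arcs go from the $b$'s to the $a$'s, the leftover set induces a \emph{transitive} tournament and contains no cycle at all. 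So the entire burden is to show $C_a,C_b,y_1,y_2$ can be chosen to avoid this, under the side constraints $x_t\rightarrow y_1$, $y_1\rightarrow y_2$, $y_2\rightarrow x_1$, which are genuinely binding when $k=5$. For $k=7$ your sketch can be completed: $\lvert A(Y_1,Y_2)\rvert\geqslant 45$ yields a triangle pair with at least $5>3$ cross arcs, hence two vertex-disjoint cross arcs (a bipartite graph on $3+3$ vertices with no matching of size $2$ has a vertex cover of size $1$, so at most $3$ edges), one arc serving the $5$-cycle and the other closing the triangle $(u,v,x_4)$ with $v\rightarrow x_4\rightarrow u$. But for $k=5$ there is no spare $X$-vertex, the repair cycle must live inside the four released vertices, no argument is given that a non-transitive choice exists, and your fallback of ``routing through $Z$'' would destroy the $4$-cycle $C_k$ and is not developed. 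Note the paper never attempts this repacking: in the surviving cases it instead derives structural restrictions (for $k=5$, that only one vertex of a $Y_2$-triangle can have in-neighbours in $V(C_1)$, forcing the other two to have out-degree at least $2k$, contradicting regularity; for $k=7$, that $\lvert A(V(C_i),V(C_j))\rvert\leqslant 3$ across the $Y_1$/$Y_2$ split, making an arc count go negative). To finish, you must either carry out the $k=5$ choice argument in full or pivot to a counting contradiction of that type.
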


\begin{proof}
If $T_n$ is $(2k-1)$-regular,
then $n=4k-1$, $t=k-2$
and $d_{T_n}^+(v)=d_{T_n}^-(v)=2k-1$ for every vertex $v\in V(T_n)$.
It follows that
$d_{T_n}^+(x_1)=d_{T[X]}^+(x_1)+d_{T[V(\mathcal{C})]}^+(x_1)\geqslant k-3+3h$
and thus $3h\leqslant k+2$.
Since $d_{T_n}^+(x_t)=2k-1$,
we have $h\geqslant \frac{2k-5}{3}$.
Thus $5\leqslant k\leqslant 7$.
Now we distinguish three cases to prove this claim.

\begin{case5}\label{ca:case5.1}
$k=5$.
\end{case5}

Then $n=19$, $t=3$
and $d_{T_{19}}^+(v)=d_{T_{19}}^-(v)=9$ for every vertex $v\in V(T_{19})$.
This implies that
$x_1\rightarrow V(C_1)\cup V(C_2)\cup\{x_2,x_3\}$ and $x_2\rightarrow V(C_1)\cup V(C_2)\cup\{x_3\}$.
Then $d_{T[V(C_3)\cup V(C_4)\cup V(C_5)]}^+(x_1)=1$
and $d_{T[V(C_3)\cup V(C_4)\cup V(C_5)]}^+(x_2)=2$.
It follows that $\{V(C_3), V(C_4)\}\rightarrow x_3$.

If $\lvert A(x_3,V(C_1))\rvert=2$, then it follows that $A(x_3,V(C_3)\cup V(C_4))\neq\emptyset$,
a contradiction.
Then $\lvert A(x_3,V(C_1))\rvert=3$.
It follows that $T[V(C_5)\cup\{x_3\}]$ is a strong subtournament of $T_{19}$ with $5$ vertices,
which contains a $5$-cycle $C_5^{*}$.

We show that $N^+_{T[X]}(y_i^3)=N^+_{T[X]}(y_j^3)$,
for every $i,j\in\{0,1,2\}$ and $i\neq j$.
If not,
then $T[V(C_3)\cup\{x_1,x_2\}]$ contains a $4$-cycle $C_3^{*}$.
It follows that $\mathcal{C}^*=\{C_1,C_2,C_3^*,C_4,C_5^*\}$ is a good collection, a contradiction.
Since $d_{T[V(C_3)\cup V(C_4)\cup V(C_5)]}^+(x_1)=1$
and $d_{T[V(C_3)\cup V(C_4)\cup V(C_5)]}^+(x_2)=2$,
then $V(C_3)\rightarrow \{x_1,x_2\}$.
For every $0\leqslant q\leqslant 2$,
there exists $0\leqslant p\leqslant 2$
such that $N^{+}_{T[V(C_3)]}(y_q^1)=\{y_p^3\}$.
If not,
suppose that $y_0^1\rightarrow y_p^3$ and $y_1^1\rightarrow y_{p_1}^3$,
for some $p,p_1\in\{0,1,2\}$ and $p_1\neq p$.
Let $C_1^*=(y_2^1,y_0^1,y_p^3,x_1,y_2^1)$ and $C_2^*=(y_1^1,y_{p_1}^3,x_2,y_1^1)$.
Now $\mathcal{C}^*=\{C_1^{*},C_2^*,C_3,C_4,C_5^*\}$ is a good collection, 
a contradiction.
So only $y_p^3$ has in-neighbors in $V(C_1)$,
this implies that every vertex in $V(C_3)\backslash\{y_p^3\}$ has at least $10$ out-neighbors in $T_{19}$,
a contradiction to the fact that $T_{13}$ is a $9$-regular tournament.

\begin{case5}\label{ca:case5.2}
$k=6$.
\end{case5}
Then $n=23$, $t=4$ and 
$d_{T_{23}}^+(v)=d_{T_{23}}^-(v)=11$ for every vertex $v\in V(T_{23})$.
It follows that $x_i\rightarrow V(C_1)\cup V(C_2)\cup V(C_3)\cup\{x_{i+1},\ldots,x_4\}$, where $1\leqslant i\leqslant 3$.
This implies that $d_{T_{23}}^+(x_1)\geqslant 12$,
a contradiction to the fact that $T_{23}$ is a $11$-regular tournament.

\begin{case5}\label{ca:case5.3}
$k=7$.
\end{case5}

Then $n=27$, $t=5$
and $d_{T_{27}}^+(v)=d_{T_{27}}^-(v)=13$ for every vertex $v\in V(T_{27})$.
This implies that $x_t\rightarrow V(C_1)\cup V(C_2)\cup V(C_3)\cup V(C_7)$,
otherwise we can get the same contradiction as in Case \ref{ca:case5.2}.
Then $x_i\rightarrow V(C_1)\cup V(C_2)\cup V(C_3)\cup\{x_{i+1},\ldots,x_5\}$ by Claim \ref{cl:cl1}, for every $1\leqslant i\leqslant 4$.
It follows that $V(C_4)\cup V(C_5)\cup V(C_6)\rightarrow \{x_1,\ldots,x_5\}$.
For every $q\in \{0,1,2\}$, there exists $0\leqslant p\leqslant 2$, such that $N^{+}_{T[V(C_4)]}(y_q^1)=\{y_p^4\}$.
If not, suppose that $y_0^1\rightarrow y_p^4$,
then $y_1^1\rightarrow y_{p_1}^4$ for some $p,p_1\in\{0,1,2\}$ and $p_1\neq p$.
Let $C_1^*=(y_2^1,y_0^1,y_p^4,x_1,x_2,y_2^1)$ and 
$C_4^*=(y_1^1,y_{p_1}^4,x_3,y_1^1)$.
Then $\mathcal{C}^*=\{C_1^{*},C_{2},C_{3},C_4^{*},C_5,C_6,C_7\}$ is a good collection, a contradiction.
So only $y_p^4$ has in-neighbors in $V(C_1)$,
it follows that every vertex in $V(C_4)\backslash\{y_p^4\}$ has $3$ out-neighbors in $T[V(C_1)]$.
Similarly, we can conclude that $\lvert A(V(C_i),V(C_j))\rvert\leqslant 3$ for every $1\leqslant i\leqslant 3$, $4\leqslant j\leqslant 6$.
Then
\[
\sum_{i=1}^{4}d^{+}_{T[V(C_1)\cup V(C_2)\cup V(C_3)]}(z_i)
\leqslant4\times 9-\left(9\times 13-\binom{9}{2}-3\times 3\right)
<0,
\]
a contradiction.
\end{proof}

\begin{claim}\label{cl:cl7}
$x_t\rightarrow Y_1$.
\end{claim}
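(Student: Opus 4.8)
The plan is to argue by contradiction. Since $Y_1$ consists of exactly those cycles $C_j$ with $A(x_t,V(C_j))\neq\emptyset$, the statement $x_t\rightarrow Y_1$ is equivalent to $|A(x_t,V(C_j))|=3$ for all $1\leqslant j\leqslant h$, so its failure produces a cycle $C_i\subseteq Y_1$ with $1\leqslant|A(x_t,V(C_i))|\leqslant 2$. Fixing such a $C_i$, I would first extract structure. Applying Claim~\ref{cl:cl1} with $q=t$ to every $C_j\subseteq Y_1$ gives $\{x_1,\dots,x_{t-1}\}\rightarrow V(C_j)$, and Claim~\ref{cl:cl3} (which applies precisely because such a $C_i$ exists) shows that the four vertices of $C_k$ share a common out-neighbourhood inside $\{x_1,\dots,x_{t-1}\}$ which is a suffix of the path $P$. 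Hence there is a threshold index $t_0$ with $\{x_1,\dots,x_{t_0-1}\}\rightarrow V(C_k)$ and $V(C_k)\rightarrow\{x_{t_0},\dots,x_{t-1}\}$. The point of this step is that, apart from $x_t$, no vertex of $X$ is mixed on $V(C_i)$ or on $V(C_k)$; in particular $x_t$ is the only vertex of $X$ that can enlarge a cycle of $Y_1$ or $C_k$ by a single vertex.

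I would then assemble $k$ disjoint cycles realizing the three lengths $3,4,5$, contradicting the choice of $T_n$. As $x_t$ is mixed on $C_i$, the subtournament $T[V(C_i)\cup\{x_t\}]$ is strong and Lemma~\ref{thm:Moon} provides a $4$-cycle $C_i^{*}$. Since $z_1\rightarrow z_3$ and $z_2\rightarrow z_4$, deleting $z_2$ leaves the triangle $z_1z_3z_4$ and deleting $z_3$ leaves the triangle $z_1z_2z_4$, so I may free one vertex $z_q\in\{z_2,z_3\}$, keep the other three as a $3$-cycle $C_k^{*}$, and absorb $z_q$ into a second cycle of $Y_1$. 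Concretely, I would seek a cycle $C_p\subseteq Y_1$ with $p\neq i$, a freed vertex $z_q\in\{z_2,z_3\}$ dominated by some vertex of $C_p$, and an apex $x_a$ with $t_0\leqslant a\leqslant t-1$; then $z_q\rightarrow x_a$ and $x_a\rightarrow V(C_p)$, so $T[V(C_p)\cup\{z_q,x_a\}]$ is strong and Lemma~\ref{thm:Moon} yields a $5$-cycle $C_p^{*}$. Leaving the remaining $k-3$ triangles untouched gives a good collection. This uses $h\geqslant 2$, which holds because $d^{+}_{T[X]}(x_t)=0$ forces $|A(x_t,V(\mathcal{C}))|\geqslant 2k-1$, and since $Y_2\rightarrow x_t$ one gets $3h\geqslant 2k-5\geqslant 5$, hence $h\geqslant 2$.

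The main obstacle, I expect, is showing that such $C_p,z_q,x_a$ can always be chosen, i.e.\ that the orientations cooperate. The obstructing configurations are when both $z_2$ and $z_3$ dominate every cycle of $Y_1\setminus\{C_i\}$, and when the threshold is degenerate, $t_0=t$ (so $\{x_1,\dots,x_{t-1}\}\rightarrow V(C_k)$ and no admissible apex $x_a\leqslant x_{t-1}$ receives an arc from $z_q$). To handle these I would reroute the construction — interchanging the freed $C_k$-vertex with the apex, or realizing the length $5$ by merging $C_k$ with a triangle of $Y_1$ — and, when no rerouting succeeds, pass to a counting argument: the adversarial orientation forces many arcs out of $V(C_k)$ and into the vertices of $Y_1$, which together with $\{x_1,\dots,x_{t-1}\}\rightarrow Y_1\cup V(C_k)$, the arc bounds of Claim~\ref{cl:cl6}, the non-regularity of Claim~\ref{cl:cl5}, and the minimum out-degree hypothesis should push the out-degree of some vertex of $C_k$ (or of $Y_1$) below $2k-1$, the final contradiction.
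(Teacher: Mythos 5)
Your generic-case construction is sound as far as it goes: with a cycle $C_i\subseteq Y_1$ on which $x_t$ is mixed, a second cycle $C_p\subseteq Y_1$, a vertex $z_q\in\{z_2,z_3\}$ having an in-neighbour in $V(C_p)$, and an apex $x_a$ with $z_q\rightarrow x_a\rightarrow V(C_p)$, the subtournaments $T[V(C_i)\cup\{x_t\}]$, $T[V(C_k)\setminus\{z_q\}]$ and $T[V(C_p)\cup\{z_q,x_a\}]$ are strong, so Lemma \ref{thm:Moon} gives disjoint cycles of lengths $4$, $3$, $5$; your derivation of $h\geqslant 2$ from $2k-1\leqslant 3h+4$ is also correct. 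But the proposal has a genuine gap exactly where you flag it: neither obstructing configuration is ever resolved. If $\{z_2,z_3\}\rightarrow V(C_p)$ for every admissible $p$ (and note $z_1,z_4$ cannot be freed instead, since deleting either from $C_k$ leaves a transitive triple), your suggested reroutings fail: interchanging the roles of $z_q$ and $x_a$ makes $x_a$ a source of the would-be $5$-cycle (Claim \ref{cl:cl1} forces $x_a\rightarrow V(C_p)$), and merging $C_k$ or two triangles of $Y_1$ into one longer cycle leaves only $k-1$ cycles, with no way to recover a $k$-th cycle because $T[X]$ is acyclic. Nor does any quick degree count finish: $z_2,z_3$ having enormous outdegree is perfectly compatible with $\delta^+\geqslant 2k-1$. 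The sentence ``should push the out-degree of some vertex of $C_k$ (or of $Y_1$) below $2k-1$'' is a hope, not an argument, and this degenerate case is precisely where the content of the claim lies.

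For comparison, the paper never attempts your $3$-$4$-$5$ construction here; its proof is a counting argument from the outset. From Claim \ref{cl:cl3} (together with outdegree-criticality) it concludes $V(C_k)\rightarrow\{x_1,\ldots,x_{t-1}\}$, i.e.\ your threshold is $t_0=1$, so your obstruction (c) cannot occur; it then proves the key quantitative bound $\lvert A(V(C_j),V(C_k))\rvert\leqslant 4$ for every $1\leqslant j\leqslant h$ (otherwise $T[V(C_p)\cup V(C_k)\cup\{x_1,x_2,x_3\}]$ yields a disjoint $4$-cycle and $5$-cycle), combines it with the bound $\lvert A(V(C_i),V(C_q))\rvert\leqslant 3$ from Claim \ref{cl:cl6} for a suitable $C_q\subseteq Y_2$, and finally counts all $\binom{3k+1}{2}$ arcs of $T[V(\mathcal{C})]$ against the minimum outdegree to reach an inequality ($9h^2-6kh-13h+4k^2-10\leqslant 0$) that is impossible for $k\geqslant 5$. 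Your fallback gestures at Claims \ref{cl:cl5} and \ref{cl:cl6} but never derives the bound on $A(Y_1,Z)$ and never sets up this global count, so the case your construction cannot reach remains open; to repair the proposal you would essentially have to reproduce the paper's counting argument.
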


\begin{proof}
If not,
then there exists $1\leqslant i\leqslant h$
such that $\lvert A(V(C_i),x_t)\rvert\neq\emptyset$.
We can conclude that
$V(C_k)\rightarrow \{x_1,x_2,\ldots,x_{t-1}\}$ by Claim \ref{cl:cl3}.

We state that
$\lvert A(V(C_j),V(C_{k}))\rvert\leqslant 4$ for every $1\leqslant j\leqslant h$;
otherwise, there exists $1\leqslant p\leqslant h$
such that $\lvert A(V(C_p),V(C_{k}))\rvert\geqslant 5$.
It follows that $T[V(C_p)\cup V(C_k)\cup\{x_1,x_2,x_3\}]$
contains a $4$-cycle and a $5$-cycle, a contradiction.

If there exists $h+1\leqslant q\leqslant k-1$
such that $\lvert A(V(C_q),X)\rvert\geqslant 6$,
then $\lvert A(V(C_i),V(C_q))\rvert\leqslant 3$
for every $1\leqslant i\leqslant h$.
Therefore,
\begin{align*}
\lvert A(T[V(\mathcal{C})])\rvert&=\binom{3k+1}{2}=\lvert A(T[Y_1])\rvert+\lvert A(T[Y_2\cup Z])\rvert+\lvert A(Y_1,Y_2\cup Z)\rvert+\lvert A(Y_2\cup Z,Y_1)\rvert\\&\geqslant 3h\times(2k-1)-(3h-2k+5)+\binom{3k+1-3h}{2}+6h+8h.
\end{align*}
A simple calculation yields that $9h^2-6kh-13h+4k^2-10\leqslant 0$,
this requires $36k^2-372k+649\geqslant 0$.
So $k\leqslant 2$,
a contradiction to $k\geqslant5$.
\end{proof}

\begin{claim}\label{cl:cl8}
At most two cycles of length $3$ have out-neighbors in $\{x_1,x_2,\ldots,x_{t-1}\}$.
At least one cycle of length $3$ has out-neighbors in $\{x_1,x_2,\ldots,x_{t-1}\}$.
\end{claim}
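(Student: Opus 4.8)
The plan is to prove the two assertions separately, after one common observation. If a $3$-cycle $C_i$ has an out-neighbour $x_p$ with $p\le t-1$, then Claim~\ref{cl:cl1} forces $V(C_i)\to x_t$, so $C_i$ cannot lie in $Y_1$ (its vertices are dominated by $x_t$ by Claim~\ref{cl:cl7}); hence every $3$-cycle relevant to this claim lies in $Y_2$. For such a cycle let $p_i$ be the least index with $A(V(C_i),x_{p_i})\neq\emptyset$. Claim~\ref{cl:cl1} together with the acyclicity (hence transitivity) of $T[X]$ then gives the rigid picture $\{x_1,\dots,x_{p_i-1}\}\to V(C_i)\to\{x_{p_i+1},\dots,x_t\}$, with at least one arc from $V(C_i)$ to $x_{p_i}$. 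Consequently, relative to a fixed family of such cycles, an $x$-vertex of small index is a source and one of large index is a sink, and only the $x_q$ whose index falls in the range spanned by the various $p_i$ can have both an in- and an out-neighbour among these cycles; it is exactly these \emph{usable} vertices that may be spliced into a longer cycle.

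\textbf{Upper bound.} Suppose, for contradiction, that three $3$-cycles $C_a,C_b,C_c$ have out-neighbours in $\{x_1,\dots,x_{t-1}\}$, ordered so that $p_a\le p_b\le p_c$. My aim is to repartition the nine vertices $V(C_a)\cup V(C_b)\cup V(C_c)$ together with two usable $x$-vertices into two disjoint $3$-cycles and a disjoint $5$-cycle; leaving $C_k$ and all other cycles untouched, this yields $k$ disjoint cycles with lengths $3$, $4$ and $5$, a good collection and the desired contradiction. When $p_c\ge p_a+2$ the vertices $x_{p_a}$ and $x_{p_a+1}$ are both usable (each has an in-neighbour in $C_a$ and an out-neighbour in $C_c$), and I would construct the required $5$-cycle through them by a case analysis on the directions of the arcs joining the three cycles to these two vertices, realizing the cycles via Moon's theorem (Lemma~\ref{thm:Moon}) once the relevant induced subtournaments are seen to be strong --- much as in the argument of Claim~\ref{cl:cl6}. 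The clustered configuration $p_a\le p_b\le p_c\le p_a+1$, in which at most one usable intermediate vertex is available, must be handled on its own, exploiting $x_{p_a}$ together with the dense mutual adjacencies of $C_a,C_b,C_c$ and, if need be, the vertex $x_t$ into which all three cycles point. I expect this explicit case analysis to be the main obstacle.

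\textbf{Lower bound.} Suppose, for contradiction, that no $3$-cycle has an out-neighbour in $\{x_1,\dots,x_{t-1}\}$; then $\{x_1,\dots,x_{t-1}\}\to Y$. Now every $x_p$ with $p\le t-1$ is a source over all the $3$-cycles while $x_t$ meets $Y_1$ and $Y_2$ only as a source and a sink respectively, so no new cycle can absorb an $x$-vertex and no good collection can be assembled by repartitioning; the contradiction must therefore be quantitative. Each $v\in Y$ has $d^+_{T_n}(v)\ge 2k-1$ and, being dominated by $\{x_1,\dots,x_{t-1}\}$, has all but at most one of its out-neighbours inside $V(\mathcal C)$; summing this over the $3(k-1)$ vertices of $Y$ and comparing with the $\binom{3k-3}{2}$ arcs inside $Y$ plus the at most $12(k-1)$ arcs between $Y$ and $Z=V(C_k)$ produces an inequality of the shape $6(k-1)^2+3h\le \tfrac{3}{2}(k-1)(3k+4)$, where $h=|Y_1|$. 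Since $d^+_{T_n}(x_t)\ge 2k-1$ and $x_t$ dominates only $Y_1$ and part of $V(C_k)$ we also have $h\ge (2k-5)/3>0$, and the two bounds are incompatible for $k\ge 8$. The remaining values $k\in\{5,6,7\}$ I would dispose of by the sharper degree bookkeeping already employed in Claims~\ref{cl:cl4} and~\ref{cl:cl5}, ruling out in particular the near-regular configurations, which completes the argument.
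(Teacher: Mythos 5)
Both halves of your proposal stop short of a proof, and the upper-bound half rests on a construction that can actually fail. Your plan there is purely local: repartition $V(C_a)\cup V(C_b)\cup V(C_c)\cup\{x_{p_a},x_{p_a+1}\}$ into two $3$-cycles and a $5$-cycle. But nothing you cite rules out the configuration in which $V(C_a)\rightarrow V(C_b)$, $V(C_a)\rightarrow V(C_c)$ and $V(C_b)\rightarrow V(C_c)$ (all nine arcs each time), together with the forced arcs $x_{p_a},x_{p_a+1}\rightarrow V(C_b)\cup V(C_c)$, $V(C_a)\rightarrow x_{p_a+1}$ and $x_{p_a}\rightarrow x_{p_a+1}$. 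In that case $C_c$, then $C_b$, then $\{x_{p_a+1}\}$ are successive sink components of the induced subtournament on your eleven vertices, so every cycle lies inside $V(C_c)$, inside $V(C_b)$, or inside $V(C_a)\cup\{x_{p_a}\}$; the longest possible cycle has length $4$, no $5$-cycle exists, and hence no repartition produces three distinct lengths (adding $x_t$ cannot help, since $Y_2\rightarrow x_t$ and $x_{p_a},x_{p_a+1}\rightarrow x_t$ make it a sink). Such a configuration can only be excluded globally, and that is exactly what the paper does: since $x_t\rightarrow Y_1$ (Claim \ref{cl:cl7}) and each $C_{r_j}$ dominates a tail of the path $P$, Claim \ref{cl:cl6} gives $\lvert A(V(C_i),V(C_{r_j}))\rvert\leqslant 3$, i.e.\ at least $6$ arcs from each $C_{r_j}$ back to each $C_i\subseteq Y_1$; counting all arcs then yields $\binom{3k+1}{2}\geqslant 3h(2k-1)+\binom{3k+1-3h}{2}+18h$, hence $h\leqslant\frac{2k-9}{3}$, contradicting $h\geqslant\frac{2k-5}{3}$. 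Your sketch never brings $Y_1$ or Claim \ref{cl:cl6} into the upper bound, and that interplay is the missing idea.

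The lower-bound half uses the same count as the paper, and your arithmetic is carried out honestly: $6(k-1)^2+3h\leqslant\frac{3}{2}(k-1)(3k+4)$ together with $3h\geqslant 2k-5$ gives $3k^2-23k+14\leqslant 0$, i.e.\ $k\leqslant 7$, so your argument settles only $k\geqslant 8$. The values $k\in\{5,6,7\}$ are deferred to ``sharper degree bookkeeping already employed in Claims \ref{cl:cl4} and \ref{cl:cl5}'', but those claims operate under different hypotheses ($Y_2=\emptyset$, respectively $(2k-1)$-regularity of $T_n$) and do not transfer to the present situation, where $Y_2\neq\emptyset$ and $T_n$ need not be regular; no actual argument for these three values is given, and handling small $k$ by ad hoc constructions is precisely where the paper expends most of its effort elsewhere. (For comparison, the paper claims this same computation yields $k\leqslant\frac{14}{3}$ and stops; solving its displayed inequality in fact gives $k\leqslant 7$, as you found, so the residual cases are a genuine issue that your proposal, as written, leaves open.)
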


\begin{proof}
Suppose that there are three cycles $C_{r_1}$, $C_{r_2}$ and $C_{r_3}$,
which all have out-neighbors in $\{x_1,x_2,\ldots,x_{t-1}\}$,
here $r_1,r_2\in\{h+1,\ldots,k-1\}$ and $r_1,r_2,r_3$ are three distinct numbers.
By Claim \ref{cl:cl6},
we have $\lvert A(V(C_i),V(C_{r_j}))\rvert\leqslant 3$
for every $1\leqslant i\leqslant h$ and $j=1,2,3$.
Then
\begin{align*}
\lvert A(T[V(\mathcal{C})])\rvert&=\binom{3k+1}{2}=\lvert T[Y_1]\rvert+\lvert A(T[Y_2\cup Z])\rvert+\lvert A(Y_2\cup Z,Y_1)\rvert+\lvert A(Y_1,Y_2\cup Z)\rvert\\
&\geqslant 3h\times(2k-1)+\binom{3k+1-3h}{2}+3\times 6h.
\end{align*}
A simple calculation yields that $h\leqslant\frac{2k-9}{3}$,
a contradiction to $h\geqslant\frac{2k-5}{3}$.

Next we suppose that no cycle of length $3$ has out-neighbors in $\{x_1,x_2,\ldots,x_{t-1}\}$.
Then we can conclude that
\[
    \sum_{y\in Y)}d^+_{T[Y]}(y)= \binom{3k-3}{2},
\]
$$\sum_{y\in Y}d^+_{T[V(\mathcal{C})]}(y)\geqslant 3h\times (2k-1)+3(k-1-h)\times (2k-2),$$
$$0\leqslant\sum_{i=1}^4 d^+_{T[Y]}(z_i)\leqslant 4\times (3k-3)-(6k^2-12k+3h+6)+\frac{9k^2-21k+12}{2}.$$
A simple calculation yields that $1\leqslant k\leqslant\frac{14}{3}$,
which contradicts $k\geqslant5$.
\end{proof}

Then we divide the discussion into the following two cases:
\begin{case0}
There are two $3$-cycles that have out-neighbors in $\{x_1,x_2,\ldots,x_{t-1}\}$.
\end{case0}
By renaming the vertices, these cycles can be set to $C_{k-2}$ and $C_{k-1}$.
We can conclude that
$$0\leqslant\sum_{i=1}^4 d^+_{T[Y]}(z_i)\leqslant 4\times (3k-3)-(6k^2-7k-4)+\frac{9k^2-21k+12}{2}.$$
$$0\leqslant\sum_{i=1}^4 d^+_{T[Y\backslash V(C_{k-1})]}(z_i)\leqslant 4\times (3k-9)-(6k^2-7k-4)+\frac{9k^2-21k+12}{2}.$$
Then by simple calculation we get $5\leqslant k\leqslant 7$.
We divide the discussion into the following three subcases.

\begin{subcase0}
Both $C_{k-2}$ and $C_{k-1}$ have out-neighbors in $\{x_1,x_2,\ldots,x_{t-2}\}$.
\end{subcase0}

It follows that
\[\sum_{y\in V(C_1)\cup \cdots \cup V(C_{k-3})} d^+_{T[\mathcal{C}]}(y)\geqslant 3h\times (2k-1)+3(k-3-h)\times(2k-2)\geqslant6k^2-22k+13,\]
and
\[
\lvert A(T[V(C_1)\cup \cdots \cup V(C_{k-3})])\rvert =\binom{3k-9}{2}=\frac{9}{2}k^2-\frac{57}{2}k+45.
\]
By Claim \ref{cl:cl6}, we can conclude that 
\[
\lvert A(V(C_1)\cup \cdots \cup V(C_{k-3}),V(C_{k-2})\cup V(C_{k-1}))\rvert\leqslant 2\times 3\times (k-3)=6k-18.
\]
Then 
\begin{align*}
\sum_{i=1}^4 d^+_{T[V(C_1)\cup \cdots \cup V(C_{k-3})]}(z_i)
&\leqslant 4\times (3k-9)-\left(6k^2-22k+13-\frac{9}{2}k^2+\frac{57}{2}k-45-6k+18\right)\\
&=-\frac{3}{2}k^2+\frac{23}{2}k-22.
\end{align*}
A simple calculation yields that $k\leqslant 4$,
a contradiction to $k\geqslant5$.

\begin{subcase0}
$C_{k-2}$ has no out-neighbors in $\{x_1,x_2,\ldots,x_{t-2}\}$, but $C_{k-1}$ has out-neighbors in $\{x_1,x_2,\ldots,x_{t-2}\}$.
\end{subcase0}

Then
\begin{align*}
\lvert A(T[V(\mathcal{C})])\rvert
&=\lvert A(T[V(C_1)\cup \cdots \cup V(C_{k-3})])\rvert+\lvert A(T[V(C_{k-2})\cup V(C_{k-1})\cup Z])\rvert\\
&+\lvert A(V(C_1)\cup \cdots \cup V(C_{k-3}),V(C_{k-2})\cup V(C_{k-1})\cup Z)\rvert\\
&+\lvert A(V(C_{k-2})\cup V(C_{k-1})\cup Z,V(C_1)\cup \cdots \cup V(C_{k-3}))\rvert\\
&\geqslant 3h\times(2k-1)+3(k-3-h)\times(2k-2)+\binom{10}{2}+6\times (k-3)+6\times h.
\end{align*}
A contradiction.

\begin{subcase0}
Neither $C_{k-2}$ nor $C_{k-1}$ has out-neighbors in $\{x_1,x_2,\ldots,x_{t-2}\}$.
\end{subcase0}
It follows that
\[
\sum_{y\in Y} d^+_{T[\mathcal{C}]}(y)\geqslant 3h\times (2k-1)+3(k-3-h)\times(2k-2)+6\times (2k-3)\geqslant6k^2-10k-5,
\]
and
\[
\lvert A(T[Y])\rvert =\binom{3k-3}{2}=\frac{9}{2}k^2-\frac{21}{2}k+6.
\]
Then 
\begin{align*}
\sum_{i=1}^4 d^+_{T[Y]}(z_i)
&\leqslant 4\times (3k-3)-\left(6k^2-10k-5-\frac{9}{2}k^2+\frac{21}{2}k-6\right)\\
&=-\frac{3}{2}k^2+\frac{23}{2}k-1.
\end{align*}
A simple calculation yields that $5\leqslant k\leqslant 7$,
then we divide the discussion into the following three subcases:

$(i)$~$k=5$.

It follows that $h=2$,
then 
\[
\sum_{i=1}^4 d^+_{T[Y_1]}(z_i)\leqslant 4\times 6-\left(6\times 9-\binom{6}{2}-3\times2\times2\right)<0,
\]
a contradiction.

$(ii)$~$k=6$.

It follows that $h=3$,
then
\[
\sum_{i=1}^4 d^+_{T[Y_1]}(z_i)\leqslant 4\times 9-\left(9\times 11-\binom{9}{2}-3\times2\times3\right)<0,
\]
a contradiction.

$(iii)$~$k=7$.

Then
\[
\sum_{i=1}^4 d^+_{T[Y]}(z_i)\leqslant 4\times 18-\left(9\times 13+3\times12+6\times11-\binom{18}{2}\right)\leqslant 6.
\]
It follows that $\lvert A(V(C_i),Z)\rvert\geqslant 6$, for every $1\leqslant i\leqslant 6$.
This implies that $z_i$ has at least $5$ out-neighbors in $X$,
for every $1\leqslant i\leqslant 4$.
It is not difficult to find a good collection,
a contradiction.

\begin{case0}
There is a $3$-cycle that has out-neighbors in $\{x_1,x_2,\ldots,x_{t-1}\}$.
\end{case0}

By renaming the vertices, this cycle can be set to $C_{k-1}$.
Then we divide the discussion into the following two subcases.

\begin{subcase0}
$C_{k-1}$ has out-neighbors in $\{x_1,x_2,\ldots,x_{t-2}\}$.
\end{subcase0}

Then $\lvert A(V(C_i),V(C_{k-1}))\rvert\leqslant3$ by Claim \ref{cl:cl6}.
We can conclude that
\[
0\leqslant\sum_{i=1}^4 d^+_{T[V(C_1)\cup\cdots\cup V(C_{k-2})]}(z_i)\leqslant 15\times (k-2)-3h\times (2k-1)-3(k-2-h)\times (2k-2)+\binom{3k-6}{2}.
\]
A simple calculation yields that $3k^2-11k+32\leqslant 0$,
a contradiction.

\begin{subcase0}
$C_{k-1}$ has no out-neighbor in $\{x_1,x_2,\ldots,x_{t-2}\}$.
\end{subcase0}

We can conclude that
\begin{align*}
\lvert A(T[V(\mathcal{C})])\rvert
&=\binom{3k+1}{2}=\lvert T[Y]\rvert+\lvert T[Z]\rvert+\lvert A(Y,Z)\rvert+\lvert A(Z,Y)\rvert\\
&\geq 3h\times(2k-1)+3(k-2-h)\times (2k-2)+3\times(2k-3)+\binom{4}{2}.
\end{align*}
This implies that $5\leqslant k\leqslant 7$.
By the definition of $r$-outdegree-critical tournaments,
we can conclude that there exists a vertex $w\in V(C_k)$ with $w\rightarrow x_1$.
Then we divide the discussion into the following three cases.

$(i)~k=5$.

If $h=3$, then
\[
\sum_{i=1}^4 d^+_{T[Y_1]}(z_i)\leqslant 4\times 9-\left(9\times 9-3\times 3-\binom{9}{2}\right)=0.
\]
It follows that $Y_1\rightarrow Z$.
This implies that $z_i$ has at least $4$ out-neighbors in $X$,
for every $1\leqslant i\leqslant 4$.
Then we assume that
\begin{align*}
\begin{split}
\left\{
\begin{array}{ll}
x_{p_1}\in N_{T[X]}^+(z_1), &p_1\neq t;\\
x_{p_2}\in N_{T[X]}^+(z_2), &p_2\neq p_1,t;\\
x_{p_3}\in N_{T[X]}^+(z_3), &p_3\neq p_1,p_2,t.
\end{array}
\right.
\end{split}
\end{align*}
Then $C_2^*=(y_0^2,z_1,x_{p_1},y_0^2)$ is a $3$-cycle,
and $C_5^*=(y_1^2,y_2^2,z_2,x_{p_2},y_1^2)$ is a $4$-cycle.
Note that $T[V(C_3)\cup \{z_3\}\cup \{x_{p_3}\}]$ is a strong subtournament of $T_n$ with $5$ vertices,
which contains a $5$-cycle $C_3^*$.
Now $\mathcal{C}^*=\{C_1,C_2^*,C_3^*,C_4,C_5^*\}$ is a good collection,
a contradiction.

If $h=2$, 
then
\[
\sum_{i=1}^4 d^+_{T[Y_1\cup V(C_3)]}(z_i)\leqslant 4\times 9-\left(6\times 9+3\times 8-3\times 3-2\times 3-\binom{9}{2}\right)=9.
\]
It follows that $\lvert A(V(C_i),Z)\rvert\geqslant 3$ for every $1\leqslant i\leqslant 3$.
We can get a contradiction similarly.

$(ii)~k=6$.

It follows that
\[
\sum_{i=1}^4 d^+_{T[Y]}(z_i)\leqslant 4\times 15-\left(9\times 11+3\times 10+3\times 9-\binom{15}{2}\right)=9
\]
and $\lvert A(V(C_i),Z)\rvert\geqslant 3$ 
for every $1\leqslant i\leqslant 5$.
Similar to the case $(i)$, we can get a contradiction.

$(iii)~k=7$.

It follows that
\[
\sum_{i=1}^4 d^+_{T[Y]}(z_i)\leqslant 4\times 18-\left(9\times 13+6\times 12+3\times 11-\binom{18}{2}\right)=3.
\]
Then $\lvert A(V(C_i),Z)\rvert\geqslant 9$
for every $1\leqslant i\leqslant 6$.
Similar to the case $(i)$, we can get a contradiction.

The proof of Theorem \ref{thm:main} is complete.

\section{Proof of Theorem \ref{thm:main2}}
\label{section:proof1}

We will also present the proof by contradiction.
Suppose that $T_n$ is a $6$-outdegree-critical tournament on $n$ vertices
but contains no good collection of vertex-disjoint cycles.
Recall that $h^*(k,2)\leqslant 2k-1$.
We can get that $T_n$ contains $3$ vertex-disjoint cycles $C_1$, $C_2$, $C_3$, in which two of them have different lengths.
Without loss of generality, assume that $\lvert V(C_1)\rvert=\lvert V(C_2)\rvert=3$ and $\lvert V(C_3)\rvert=4$.
Let $C_i=(y_0^i,y_1^i,y_2^i,y_0^i)$, where $1\leqslant i\leqslant 2$, $C_{3}=(z_1,z_2,z_3,z_4,z_1)$.
Up to isomorphism,
assume that $z_1\rightarrow z_3$ and $z_2\rightarrow z_4$.
Let $V(\mathcal{C})=V(C_1)\cup V(C_2)\cup V(C_{3})$, $X=V(T_n)\backslash V(\mathcal{C})$, $Y=V(\mathcal{C})\backslash V(C_3)$ and $Z=V(C_3)$.
It is not difficult to see that
$\lvert V(\mathcal{C})\rvert=10$
and $\lvert X\rvert=n-10$,
denote $\lvert X\rvert$ by $t$.

We can verify that Claims 1-5 and Claim 7 are also true.
The desired contradiction will appear after the following two claims.

\begin{claim}\label{cl:cl9}
The tournament $T_n$ is not $6$-regular.
\end{claim}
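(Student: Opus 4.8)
The plan is to assume that $T_n$ is $6$-regular and to manufacture a good collection, contradicting the standing hypothesis. First I would record the parameters: $6$-regularity forces $d^+_{T_n}(v)=d^-_{T_n}(v)=6$ for all $v$, hence $n=13$ and $t=|X|=3$. By Claim \ref{cl:cl2} and the reductions following it, $T[X]$ is a transitive tournament, so I may write $x_1\rightarrow x_2\rightarrow x_3$ with $x_3=x_t$. Since $1\leqslant h\leqslant k-2=1$ by Claim \ref{cl:cl4} together with the choice of $C_1$ (which gives $|A(x_t,V(C_1))|\geqslant 2$), I get $h=1$, so $Y_1=V(C_1)$, $Y_2=V(C_2)$ and $V(C_2)\rightarrow x_3$. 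Claim \ref{cl:cl7} then yields $x_3\rightarrow V(C_1)$, that is $|A(x_3,V(C_1))|=3$.

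Next I would pin down the global structure. Applying Claim \ref{cl:cl1} to $x_3\rightarrow V(C_1)$ shows that no vertex of $C_1$ can dominate $x_1$ or $x_2$ (else $V(C_1)\rightarrow x_3$, a contradiction), so $X\rightarrow V(C_1)$. Counting out-neighbours inside $V(\mathcal{C})$ gives $d^+(x_1)=4$, $d^+(x_2)=5$, $d^+(x_3)=6$ there; as $x_1$ then has a single out-neighbour outside $V(C_1)$, Claim \ref{cl:cl1} forbids $x_2$ from dominating any vertex of $C_2$, whence $V(C_2)\rightarrow x_2$, and in turn $x_2$ dominates exactly two vertices of $Z$ while $x_3$ dominates exactly three of them; write $z_d$ for the unique vertex of $Z$ with $z_d\rightarrow x_3$. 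Now Claim \ref{cl:cl6} applies with $i=1$, $j=2$ (since $|A(x_3,V(C_1))|=3$ and $|A(V(C_2),X)|\geqslant 6$), giving $|A(V(C_1),V(C_2))|\leqslant 3$; combined with the fact that each vertex of $C_1$ has exactly five out-neighbours in $V(C_2)\cup Z$ (it has none in $X$) and with $|Z|=4$, this forces $V(C_1)\rightarrow Z$ and $|A(V(C_1),V(C_2))|=3$. A final out-degree count on $C_2$ (using $V(C_2)\rightarrow\{x_2,x_3\}$ and $|A(V(C_2),V(C_1))|=6$) yields $|A(V(C_2),Z)|\leqslant 3$, i.e. $|A(Z,V(C_2))|\geqslant 9$. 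At this stage the tournament carries the cyclic block structure $X\rightarrow V(C_1)\rightarrow Z\rightarrow V(C_2)\rightarrow\{x_2,x_3\}$, with only the transitions $Z\rightarrow V(C_2)$, $Z\rightarrow X$ and $V(C_2)\rightarrow x_1$ incomplete.

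Finally I would exploit this block structure to build a good collection with lengths $3,4,5$. Keeping $C_2$ as the $3$-cycle, it remains to split $X\cup V(C_1)\cup Z$ into a vertex-disjoint $4$-cycle and $5$-cycle. Since $X\rightarrow V(C_1)\rightarrow Z$ are complete transitions and each block has a Hamiltonian path (Lemma \ref{thm:Moon}), such cycles arise by running one vertex from $X$, one or two from $C_1$, and two from $Z$, then closing up through a return arc $Z\rightarrow X$; the available return arcs are $z_d\rightarrow x_3$, two arcs $Z\rightarrow x_2$, and at least three arcs $Z\rightarrow x_1$. I would make the $5$-cycle return to $x_2$ and the $4$-cycle return to $x_1$, leaving $x_3$ unused, or symmetrically use the explicit $5$-cycle $x_3\rightarrow y_0^1\rightarrow y_1^1\rightarrow y_2^1\rightarrow z_d\rightarrow x_3$ together with a $4$-cycle on $\{x_1,x_2\}\cup(Z\setminus\{z_d\})$.

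The hard part will be precisely this last step: because the return transitions $Z\rightarrow X$ and $Z\rightarrow V(C_2)$ are only partial, I must choose which $X$-vertex is discarded, how to distribute the four vertices of $Z$ between the two cycles, and the orientation of each chosen pair inside $T[Z]$, so that the terminal $Z$-vertex of each cycle actually dominates its intended $X$-vertex. Since $|Z|=4$ while at least three vertices of $Z$ dominate $x_1$ and exactly two dominate $x_2$, a pigeonhole/splitting argument shows that a valid assignment always exists, but verifying it cleanly will require a short case analysis on $T[Z]$ and on the two in-neighbours of $x_2$ in $Z$. Producing the contradiction in this way rules out $6$-regularity and proves the claim.
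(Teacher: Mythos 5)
Your proposal reaches the right conclusion but by a genuinely different route from the paper. The paper's proof never touches Claim~\ref{cl:cl7}: it splits into the cases $\lvert A(x_3,V(C_1))\rvert=2$ and $\lvert A(x_3,V(C_1))\rvert=3$, and in each case either produces a good collection from a strong $5$-vertex subtournament via Lemma~\ref{thm:Moon}, or shows that only one vertex $y_p^2$ of $C_2$ has in-neighbours in $V(C_1)$, forcing the other two vertices of $C_2$ to have outdegree at least $7$ and contradicting $6$-regularity. You instead use Claim~\ref{cl:cl7} to discard the case $\lvert A(x_3,V(C_1))\rvert=2$ at once, derive the full block structure $X\rightarrow V(C_1)\rightarrow Z$, $V(C_2)\rightarrow\{x_2,x_3\}$, $\lvert A(V(C_1),V(C_2))\rvert=3$ (via Claim~\ref{cl:cl6}), and finish by explicitly building a $(3,4,5)$-collection. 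Your degree counts and structural deductions are all correct, and your key unproved assertion is in fact true: keeping $C_2$, taking the $5$-cycle $(x_2,y_0^1,y_1^1,z,z',x_2)$ and the $4$-cycle $(x_1,y_2^1,z'',z''',x_1)$, one needs a partition of $Z$ into two pairs whose internal arcs end at a vertex of $N^-_{T_n}(x_2)\cap Z$ and at a vertex of $N^-_{T_n}(x_1)\cap Z$ respectively; checking the three pairings of $Z$ against $\lvert N^-_{T_n}(x_2)\cap Z\rvert=2$ and $\lvert N^-_{T_n}(x_1)\cap Z\rvert\geqslant 3$ (using the known orientation of $T[Z]$) shows a valid choice always exists. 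The paper's argument is more self-contained; yours gives a cleaner global picture and an explicit construction.

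Two caveats. First, your alternative construction (the $5$-cycle $(x_3,y_0^1,y_1^1,y_2^1,z_d,x_3)$ plus a $4$-cycle on $\{x_1,x_2\}\cup(Z\setminus\{z_d\})$) does \emph{not} always work: if $z_d=z_1$, $N^+_{T_n}(x_2)\cap Z=\{z_1,z_4\}$ and $Z\rightarrow x_1$, then $T[\{z_2,z_3,z_4\}]$ is transitive with sink $z_4$, the only out-neighbour of $x_2$ among $\{z_2,z_3,z_4\}$ is $z_4$, and $z_4$ dominates neither $z_2$ nor $z_3$, so no $4$-cycle exists on those five vertices; only the version routing the $5$-cycle back through $x_2$ and the $4$-cycle back through $x_1$, with the pairing argument above, is robust, so that case analysis must actually be written out. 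Second, your proof stands or falls with the availability of Claim~\ref{cl:cl7} at $k=3$: the paper asserts it carries over but its written proof concludes with ``$k\leqslant 2$, a contradiction to $k\geqslant 5$'' and so does not apply verbatim here, and the fact that the paper's own proof of the present claim carefully handles $\lvert A(x_3,V(C_1))\rvert=2$ (exactly the case Claim~\ref{cl:cl7} would exclude) suggests the authors deliberately avoided that dependence. Your argument inherits this weakness; the paper's does not.
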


\begin{proof}
If $T_n$ is $6$-regular,
then $n=13$, $t=3$
and $d_{T_{13}}^+(v)=d_{T_{13}}^-(v)=6$ for every vertex $v\in V(T_{13})$.
This implies that
$x_1\rightarrow V(C_1)\cup\{x_2,x_3\}$ and $x_2\rightarrow V(C_1)\cup\{x_3\}$.
It follows that $d_{T[V(C_2)\cup V(C_3)]}^+(x_1)=1$
and $d_{T[V(C_2)\cup V(C_3)]}^+(x_2)=2$.

\begin{case10}\label{sca:subcase10.1.1}
$\lvert A(x_3,V(C_1))\rvert=2$.
\end{case10}

It follows that $x_3\rightarrow V(C_3)$
and $N^{+}_{T[X]}(z_i)=N^{+}_{T[X]}(z_j)$,
where $i,j\in \{1,2,3,4\}$ and $i\neq j$,
that is $V(C_3)\rightarrow \{x_1,x_2\}$.
Then $T[V(C_2)\cup \{x_1,x_{2}\}]$ is a strong subtournament of $T_{13}$ with $5$ vertices,
which contains a $5$-cycle $C_2^*$.
Now $\mathcal{C}^*=\{C_1,C_2^*,C_3\}$ is a good collection, a contradiction.

\begin{case10}\label{sca:subcase10.1.2}
$\lvert A(x_3,V(C_1))\rvert=3$.
\end{case10}

It follows that $T[V(C_3)\cup\{x_3\}]$ is a strong subtournament of $T_{13}$ with $5$ vertices,
which contains a $5$-cycle $C_3^{*}$.
We show that $N^+_{T[X]}(y_i^2)=N^+_{T[X]}(y_j^2)$,
here $i,j\in\{0,1,2\}$ and $i\neq j$.
If not,
then $T[V(C_2)\cup\{x_1,x_2\}]$ contains a $4$-cycle $C_2^{*}$.
Now $\mathcal{C}^*=\{C_1,C_2^*,C_3^*\}$ is a good collection, a contradiction.
Since $d^{+}_{T[V(C_2)\cup V(C_3)]}(x_1)=1$
and $d^{+}_{T[V(C_2)\cup V(C_3)]}(x_2)=2$,
then $V(C_2)\rightarrow \{x_1,x_2\}$.
For every $0\leqslant q\leqslant 2$,
there exists $0\leqslant p\leqslant 2$
such that $N^{+}_{T[V(C_2)]}(y_q^1)=\{y_p^2\}$.
If not,
suppose that $y_0^1\rightarrow y_p^2$ and $y_1^1\rightarrow y_q^2$,
where $p,q\in\{0,1,2\}$ and $q\neq p$.
Let $C_1^*=(y_2^1,y_0^1,y_p^2,x_1,y_2^1)$ and $C_2^*=(y_1^1,y_q^2,x_2,y_1^1)$.
Then $\mathcal{C}^*=\{C_1^{*},C_2^*,C_3^*\}$ is a good collection, a contradiction.
So only $y_p^2$ has in-neighbors in $V(C_1)$,
this implies that every vertex in $V(C_2)\backslash\{y_p^2\}$ has at least $7$ out-neighbors in $T_{13}$,
a contradiction to the fact that $T_{13}$ is a $6$-regular tournament.
\end{proof}

\begin{claim}\label{cl:cl10}
No cycle of length $3$ has out-neighbors in $\{x_1,x_2,\ldots,x_{t-2}\}$.
\end{claim}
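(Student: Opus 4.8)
The plan is to argue by contradiction: suppose some $3$-cycle has an out-neighbor in $\{x_1,\ldots,x_{t-2}\}$. Since $k=3$ and $Y_2\neq\emptyset$ (the analogue of Claim \ref{cl:cl4}), we have $h=1$, $Y_1=V(C_1)$ and $Y_2=V(C_2)$. First I would dispose of $C_1$: by Claim \ref{cl:cl7} we have $x_t\rightarrow V(C_1)$, so $\lvert A(x_t,V(C_1))\rvert=3$, and then the second half of Claim \ref{cl:cl1} (applied with $q=t$) forces $x_p\rightarrow V(C_1)$ for every $p<t$; hence $X\rightarrow V(C_1)$ and $C_1$ has no out-neighbor in $X$ at all. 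Thus the offending $3$-cycle must be $C_2$, and by the monotonicity in Claim \ref{cl:cl1} an out-neighbor $x_p$ with $p\leqslant t-2$ propagates to $V(C_2)\rightarrow\{x_{p+1},\ldots,x_t\}$, giving $\lvert A(V(C_2),X)\rvert\geqslant 6$.

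The heart of the argument is a degree count that pins down the structure. Since $\lvert A(x_t,V(C_1))\rvert=3$ and $\lvert A(V(C_2),X)\rvert\geqslant 4$, case $(1)$ of Claim \ref{cl:cl6} yields $\lvert A(V(C_1),V(C_2))\rvert\leqslant 3$. Now I sum the outdegrees over $V(C_1)$: each vertex has outdegree at least $6$, the internal $3$-cycle contributes $3$ arcs, at most $3$ arcs go to $V(C_2)$, at most $12$ go to $Z$, and none go to $X$. As $18\leqslant 3+3+12+0$, every inequality is tight, so $V(C_1)\rightarrow Z$, $\lvert A(V(C_1),V(C_2))\rvert=3$, and each vertex of $C_1$ has outdegree exactly $6$. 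In particular $Z$ sends no arc to $V(C_1)$, so counting outdegrees over $Z$ (internal $6$ arcs, at most $12$ to $V(C_2)$, none to $V(C_1)$) forces $\lvert A(Z,X)\rvert\geqslant 24-6-12=6$.

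This leaves the layered configuration $X\rightarrow V(C_1)\rightarrow Z$ together with at least six back-arcs from $Z$ to $X$, and the goal is to exhibit three vertex-disjoint cycles of lengths $3,4,5$. The basic gadget is that any arc $z_i\rightarrow x_p$ yields a $5$-cycle $x_p\rightarrow y_0^1\rightarrow y_1^1\rightarrow y_2^1\rightarrow z_i\rightarrow x_p$; one then wants a disjoint $4$-cycle on $Z\setminus\{z_i\}$ together with one further vertex, while keeping $C_2$ as the $3$-cycle, or alternatively one exploits the arcs $z_j\rightarrow V(C_2)\rightarrow x_t\rightarrow z_j$ that the tight count forces in order to build short cycles through $Z$, $V(C_2)$ and $X$ at once (Lemma \ref{thm:Moon} extracts the required cycles from the strongly connected pieces). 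I expect the main obstacle to be exactly this final assembly: the acyclicity of $T[X]$ (Claim \ref{cl:cl2}) and the prefix/suffix monotonicity of Claim \ref{cl:cl1} rule out the naive cycles that mix a single cycle-vertex with two vertices of $X$, so one must choose the deleted vertex $z_i$ and the closing vertex with care and split into a short case analysis according to which vertex of $Z$ dominates $x_t$ (recall $x_t$ dominates at least three vertices of $Z$, since $x_t$ is a sink of $T[X]$ with outdegree at least $6$ while $V(C_2)\rightarrow x_t$). Producing such a good collection contradicts the choice of $T_n$ and finishes the claim.
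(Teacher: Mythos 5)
Your proposal follows essentially the same route as the paper's proof: reduce the claim to $C_2$ (via Claim \ref{cl:cl7} and Claim \ref{cl:cl1}), apply case $(1)$ of Claim \ref{cl:cl6} to get $\lvert A(V(C_1),V(C_2))\rvert\leqslant 3$, and then run the outdegree count over $V(C_1)$ to force $V(C_1)\rightarrow Z$, which is exactly the paper's computation $\sum_{i=1}^4 d^+_{T[V(C_1)]}(z_i)\leqslant 4\times 3-(3\times 6-3-3)=0$. You actually supply more detail than the paper on the last step, which it waves off with ``it is not difficult to find a good collection'': your extra counts (every vertex of $Z$ has an out-neighbor in $X$, $x_t$ dominates at least three vertices of $Z$) and the sketched case analysis on which vertex of $Z$ dominates $x_t$ are precisely what is needed to assemble the $3$-, $4$- and $5$-cycles, so the hedged final assembly does go through.
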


\begin{proof}
If $C_2$ has out-neighbors in $\{x_1,x_2,\ldots,x_{t-2}\}$,
then by Claim \ref{cl:cl6},
we have $\lvert A(V(C_1),V(C_{2}))\rvert\leqslant 3$.
It follows that
\[
    \sum_{i=1}^4 d^+_{T[V(C_{1})]}(z_i)\leqslant 4\times 3-\left(3\times 6-3-3\right)=0.
\]
It is not difficult to find a good collection, a contradiction.
\end{proof}

By the definition of $r$-outdegree-critical tournaments,
there exists $z'\in V(C_3)$ by Claim \ref{cl:cl8},
such that $d_{T_n}^+(z')=6$ and $z'\rightarrow x_1$.
Since $d_{T_n}^+(y_i^1)\geqslant 6$,
we have $d_{T[V(C_2)]}^+(y_i^1)\geqslant 1$ for every $0\leqslant i\leqslant 2$.
Now we state that $\{x_1,x_2,\ldots,x_{t-1}\}\rightarrow V(C_2)$.
If not, this implies that $\lvert A(V(C_1),V(C_{2}))\rvert\leqslant 3$ by Claim \ref{cl:cl6},
then $d^{+}_{T[V(C_2)]}(y_i^1)=1$ and $V(C_1)\rightarrow V(C_3)$.
So $T[V(C_2)\cup \{y_0^1,x_{t}\}]$ is a strong subtournament of $T_n$ with $5$ vertices,
which contains a $5$-cycle $C_{2}^{*}$.
Note that $d^{+}_{T[Y]}(z_j)\leqslant 3$ for every $1\leqslant j\leqslant 4$,
then $V(C_3)\backslash \{z'\}$ have different out-neighbors $x_p$ than $x_1$.
So $T[V(C_3)\backslash \{z'\}\cup \{y_1^1,x_{p}\}]$ contains a $4$-cycle $C_{3}^{*}$.
Let $C^{*}=(y_2^1,z',x_1,y_2^1)$.
Then $\mathcal{C}^*=\{C_1^{*},C_2^*,C_3^*\}$ is a good collection, a contradiction.
There exists $0\leqslant p\leqslant 2$,
such that $d^{+}_{T[V(C_2)]}(y_p^1)\geqslant 2$,
a contradiction to $\lvert A(V(C_1),V(C_{2}))\rvert\leqslant 3$.
Then $\{x_1,x_2,\ldots,x_{t-1}\}\rightarrow V(C_2)$.
Therefore,
$$\sum_{j=1}^{4}d^{+}_{T[V(\mathcal{C})]}(z_j)\leqslant 45-3\times 6-3\times5=12,$$
such that $$d^{+}_{T[Y]}(z_2)+d^{+}_{T[Y]}(z_3)\leqslant 6.$$
It follows that $d^{-}_{T[Y]}(z_q)\geqslant 1$ and $d^+_{T[X]}(z_q)\geqslant 2$, where $2\leqslant q\leqslant 3$.
Suppose that $z_q$ has a different out-neighbor $x_p$ in $X$ other than $x_t$.

Suppose that there exists $u\in V(C_r)$ with $u\rightarrow z_q$, where $1\leqslant r\leqslant 2$.

Firstly, we prove that $z_q\rightarrow \{x_{t-1},x_t\}$.
If $x_t\rightarrow z_q$, then $T[V(C_1)\cup V(C_2)\backslash\{u\}\cup\{x_{t}\}]$ is a strong subtournament of $T_n$ with at least $6$ vertices,
which contains a $5$-cycle $C^{*}$.
Note that $T[\{u,z_q\}\cup N^+_{T[X]}(z_q)]$ is a strong subtournament of $T_n$ with $4$ vertices,
which contains a $4$-cycle $C^{**}$.
Let $C_3^{*}=(z_1,z_{5-i},z_4,z_1)$.
Then $\mathcal{C}^*=\{C^{*},C^{**},C_3^*\}$ is a good collection, a contradiction.
It follows that $z_i\rightarrow x_t$.
If $x_{t-1}\rightarrow z_q$, 
then assume that $z_q\rightarrow x_p$, where $p<t-1$.
Replace $C^{**}$ with $(u,z_q,x_p,x_{p+1},u)$, 
we can deduce a contradiction, so $z_q\rightarrow x_{t-1}$.

Then we can prove that $\{z_{q+1},z_{q+2},z_{q+3}\}\rightarrow \{x_2,\ldots,x_{t-1}\}$.
Note that $C^{'}=(z_{q+1},z_{q+2},\\z_{q+3},z_{q+1})$ is a $3$-cycle
and $x_1\in \{N^+_{T_n}(z_{q+1})\cup N^+_{T_n}(z_{q+2})\cup N^+_{T_n}(z_{q+3})\}$.
If there exists $2\leqslant \ell\leqslant t-2$ such that $d^+_{\{z_{q+1},z_{q+2},z_{q+3}\}}(x_{\ell})\geqslant 1$,
then $T[\{x_1,\ldots,x_{\ell}\}\cup \{z_{q+1},z_{q+2},z_{q+3}\}]$ is a strong subtournament of $T_n$ with at least $5$ vertices,
which contains a $5$-cycle $C^{*}$.
Note that $T[V(C_r)\cup\{z_q\}\cup \{x_{t-1}\}]$ is a strong subtournament of $T_n$ with at least $5$ vertices,
which contains a $4$-cycle $C^{**}$.
Now $\mathcal{C}^*=\{C^{*},C^{**},C_{3-r}\}$ is a good collection, 
a contradiction.

It is not difficult to see that $T[\{x_{t-2},x_t\}\cup \{z_{q+1},z_{q+2},z_{q+3}\}]$ is a strong subtournament of $T_n$ with $5$ vertices,
which contains a $5$-cycle $C^{*}$.
Note that $T[V(C_r)\cup\{z_q\}\cup \{x_{t-1}\}]$ is a strong subtournament of $T_n$ with at least $5$ vertices,
which contains a $4$-cycle $C^{**}$.
Now $\mathcal{C}^*=\{C^{*},C^{**},C_{3-r}\}$ is a good collection, 
a contradiction.

The proof of Theorem \ref{thm:main2} is complete.

\section{Concluding remarks}\label{section:CR}

In this paper,
we first introduce a new function $h(k,\ell)$ 
which generalizes the functions defined in  
Conjecture \ref{conj:BTC} and Conjecture \ref{conj:LC} simultaneously.
Afterwards,
we concentrate on finding vertex-disjoint cycles 
of different lengths in tournaments.
We improve a main result of Chen and Chang in \cite{CC} by 
showing that when $k\geqslant 5$ every tournament with mminimum outdegree at least $2k-1$ contains $k$ vertex-disjoint cycles in which
three of them have different lengths,
i.e., $h^*(k,3)\leqslant 2k-1$ for $k\geqslant 5$.
In addition, we show that $h^*(3,3)=6$,
which also answers a question proposed by Chen and Chang in \cite{CC}.

For every digraph containing $k$ vertex-disjoint cycles 
in which $\ell$ of them have different lengths, 
the sum of the lengths of these cycles is clearly not less than 
the sum of the numbers in $\{3,4,\ldots,\ell+2\}$. 
Also, the sum of the lengths of the remaining $k-\ell$ cycles 
is not less than $3(k-\ell)$. 
Thus the order of such a digraph is at least 
$3(k-\ell)+\sum_{i=3}^{\ell-2} i$.
Since the minimum outdegree of each tournament 
is less than half of its order,
the following lower bound of $h^*(k,\ell)$ holds, 
\[
h^*(k,\ell)
\geqslant \frac{1}{2} \left(3(k-\ell)-1+\sum_{i=3}^{\ell+2} i\right)
=\frac{3k-1}{2}+\frac{\ell^2-\ell}{4}.
\]
For $k$-regular tournaments with small values of $k$,
one can see that each contains exactly $2k+1$ vertices
and thus cannot contain too many vertex-disjoint cycles of different lengths.
So the equality of the above lower bound may not hold.
But for large values of $k$,
it would be interesting to decide 
whether the equality holds. 
We propose the following problem.

\begin{problem}\label{prob:cdl}
For any positive integer $\ell$,
does there exist an integer $k_{\ell}$ such that,
for each $k\geqslant k_{\ell}$,
every tournament with minimum outdegree at least 
$\frac{3k-1}{2}+\frac{\ell^2-\ell}{4}$
contains $k$ vertex-disjoint cycles in which $\ell$ of them have different lengths?
\end{problem}

One main result of Bang-Jensen et al. in \cite{BBT} 
implies that the answer for Problem \ref{prob:cdl} is `yes' for $\ell=1$.
Note that the minimum outdegree of the above tournament is equal to half of the sum of the number of vertices of the cycle collection.
It would be interesting to check 
if for sufficiently large $k$,
for any $k$ different positive integers $a_1, \ldots, a_k$, 
every digraph with minimum outdegree at least $\frac{1}{2}(\sum_{i=1}^k a_i-1)$ 
contains $k$ vertex-disjoint cycles, 
whose lengths are $a_1, \ldots, a_k$, respectively.
We propose the problem below.

\begin{problem}
For any $\alpha\geqslant 3$,
does there exist an integer $k_{\alpha}$ such that,
for any $k\geqslant k_{\alpha}$ 
and any $k$ integers $a_1, \ldots, a_k\in\{3,4,\ldots,\alpha\}$,
every tournament with minimum outdegree at least 
$\frac{1}{2}(\sum_{i=1}^k a_i-1)$ contains $k$ vertex-disjoint cycles 
whose lengths are $a_1, \ldots, a_k$, respectively?
\end{problem}

Note also that for each positive integer $d$
every digraph with minimum outdegree at least $d$
contains $d$ cycles of different lengths.
So in the setting of tournaments,
the finiteness of $h^*(k,\ell)$
follows directly from the splitting results in \cite{ABB, YBWW}.
But in the setting of general digraphs,
few results on the finiteness of $h(k,\ell)$ are known.
One can easily check that $h(1,1)=1$. 
Thomassen \cite{T} proved that $h(2,1)=3$. 
Lichiardopol et al. \cite{LPS}, 
and Bai and Manoussakis \cite{BM} showed that $h(3,1)=5$.
Lichiardopol \cite{L} proved that $h(2,2)=4$ and,
moreover, conjectured that $h(k,k)$ is finite for each $k$.
This conjecture is widely open for any $k\geqslant 3$.
In fact, there is not even any partial progress on this problem in the past ten years.
Here we propose a weaker conjecture for further research.

\begin{conjecture}
For every positive integer $\ell$,
if the integer $k$ is sufficiently large compared to $\ell$,
then $h(k,\ell)$ is finite.
\end{conjecture}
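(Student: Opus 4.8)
The plan is to argue by contradiction and to reduce the problem, by pure counting, to a single extremal configuration that one then destroys by cycle surgery. Fix $\ell$ and suppose $k$ is large. Let $D$ have minimum outdegree at least $N$, where $N$ will be chosen enormous compared with $k$ and $\ell$, and suppose $D$ contains no $k$ vertex-disjoint cycles with $\ell$ of different lengths. First I would invoke the finiteness of $f$ (recall $f(m)\leqslant 18m$ by Buci\'{c}) to extract, from $N\geqslant f(M)$, a family of $M$ vertex-disjoint cycles for some huge $M$. If at least $\ell$ distinct lengths occurred among these $M$ cycles, I could select $\ell$ of pairwise different lengths and top up with any $k-\ell$ of the remaining cycles, producing the forbidden configuration; hence \emph{every} such family uses at most $\ell-1$ distinct lengths. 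By pigeonhole one length $a$ is shared by at least $M/(\ell-1)=:m'$ of the disjoint cycles, and $m'$ can be made as large as desired by enlarging $N$. Thus the whole difficulty is concentrated in a highly structured situation: a huge collection $C_1,\dots,C_{m'}$ of pairwise vertex-disjoint cycles, all of the common length $a$, sitting inside a digraph of very large minimum outdegree.

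The heart of the argument, and the step I expect to be the main obstacle, is to manufacture $\ell$ vertex-disjoint cycles of $\ell$ \emph{different} lengths out of this monochromatic supply. In a tournament this would be routine: between any two of the $C_i$ there are many arcs and, by Moon's theorem (Lemma~\ref{thm:Moon}), every strong subtournament is vertex-pancyclic, so one simply reroutes to obtain cycles of prescribed lengths — this is exactly why the tournament version follows at once from the splitting results of \cite{ABB,YBWW}. In a general digraph neither tool is available: two of the cycles $C_i,C_j$ may have no arc between them, and there is no known splitting theorem producing induced subdigraphs of prescribed minimum outdegree. The mechanism I would try instead is to \emph{contract} each $C_i$ to a single vertex $c_i$. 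Because each $C_i$ has only $a$ vertices while every vertex retains outdegree at least $N$, an enormous number of arcs leave $\bigcup_i V(C_i)$, and I would argue that the contracted digraph (on the $c_i$ together with the remaining vertices of $D$) inherits enough cyclic structure to contain closed walks passing through many of the $c_i$. Expanding such an auxiliary cycle back into $D$, I would enter each traversed $C_i$ at a chosen vertex and leave it at another, using a directed sub-path of $C_i$ whose length I may select in $\{1,\dots,a-1\}$; by combining several cycles and varying the entry/exit points, the total length of the reconstructed cycle can be pushed through a whole range of values away from $a$.

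Granting this length-control, the endgame is bookkeeping made possible by the abundance of material. Since $m'$ is enormous, I would build the $\ell$ target cycles one at a time, each on a fresh, pairwise disjoint block of the $C_i$ (and of the outside vertices), tuning their reconstructed lengths to $\ell$ distinct values; the still-untouched $C_i$ then furnish the extra $k-\ell$ disjoint cycles needed to reach a total of $k$. This is precisely where the hypothesis that \textbf{$k$ is large compared with $\ell$} enters: the reduction guarantees a surplus of disjoint same-length cycles only when $k$ is large, and the surgery spends a bounded (in terms of $\ell$) number of them per new length, so enough must remain both to realise all $\ell$ lengths and to complete the family to size $k$. The genuine gap — and the reason the conjecture is stated in this weakened form — is the robustness of the expansion step: one must guarantee that the entry and exit points can be chosen so that the realised lengths actually hit $\ell$ prescribed distinct values while keeping all the blocks vertex-disjoint, and controlling directed reachability to this degree in an arbitrary digraph, without the density of tournaments, is the crux that I do not expect to dispatch by counting alone.
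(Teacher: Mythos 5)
This statement is posed in the paper as an open conjecture --- the paper offers no proof of it, and explicitly remarks that even its special case $h(k,k)$ (Lichiardopol's conjecture) has seen no partial progress in ten years --- so your attempt must stand entirely on its own, and it does not: it is an attack plan with a hole at precisely the decisive step, a hole you yourself flag. Your first reduction is sound: if $D$ has minimum outdegree at least $18M$ with $M\geqslant k$ huge, then Buci\'{c}'s bound gives $M$ disjoint cycles, the absence of the forbidden configuration forces at most $\ell-1$ distinct lengths among them, and pigeonhole yields $m'\geqslant M/(\ell-1)$ disjoint cycles of a common length $a$. But converting this monochromatic supply into $\ell$ disjoint cycles of $\ell$ distinct lengths is not a technical ``obstacle''; it is the entire content of the conjecture, and the mechanism you sketch is flawed as stated. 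When an auxiliary cycle through the contracted vertex $c_i$ is expanded back into $D$, the portion inside $C_i$ is the directed sub-path of $C_i$ from the entry vertex $u$ to the exit vertex $v$, and its length is \emph{forced} by the pair $(u,v)$ --- it is not ``selectable in $\{1,\dots,a-1\}$.'' Which pairs $(u,v)$ are usable is dictated by which arcs happen to exist between $C_i$ and the rest of $D$; in a general digraph (unlike a tournament, where every pair of vertices is adjacent) all arcs into $C_i$ may enter at a single vertex and all arcs out may leave from a single vertex, so every expanded cycle picks up exactly the same contribution from each $C_i$, and no choice of entry/exit points, and no pigeonhole on the abundance $m'$, produces $\ell$ distinct realized lengths. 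This is exactly why Moon's theorem and the splitting results of \cite{ABB,YBWW} settle the tournament case $h^*(k,\ell)$ while the digraph case remains open.

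A further sign that the outline cannot be completed by these means: the hypothesis that $k$ is large compared to $\ell$ does no work in it. Your surplus $m'$ is controlled by $N$ (through $M$), not by $k$, so your bookkeeping in the final paragraph would go through verbatim with $k=\ell$; a completed version of your argument would therefore prove Lichiardopol's full conjecture that $h(k,k)=g(k)$ is finite for all $k$, a much stronger statement than the one in question. Any genuine proof of this weaker conjecture should be expected to exploit the surplus of $k-\ell$ ``free'' cycles in an essential way, which your plan never does.
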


\end{document}